\pgfplotsset{compat=newest} 
\pgfplotsset{plot coordinates/math parser=false}
\newcommand{\norm}[1]{\left\lVert#1\right\rVert}
\newcommand{\Id}{\operatorname{Id}}  %
\newcommand{\diag}{\operatorname{diag}}  %
\newcommand{\N}{\mathbb{N}} %
\newcommand{\R}{\mathbb{R}} %
\newcommand{\HT}{\mathcal{H}\operatorname{-Tucker}} 
\newcommand{\righthandside}{right\hyp{}hand~side}
\newcommand{\lowrank}{low\hyp{}rank}
\newcommand{\treesymbol}{z}
\newcommand{\AlgA}{\mathbf{A}} 
\newcommand{\Algb}{\mathbf{b}}
\newcommand{\AlgM}{\mathbf{M}}
\newcommand{\Algphi}{\boldsymbol{\phi}}   
\newcommand{\AlgR}{\boldsymbol{\rho}}
\newcommand{\AlgZ}{\boldsymbol{\zeta}}
\newcommand{\AlgP}{\boldsymbol{\pi}}
\newcommand{\AlgQ}{\boldsymbol{\theta}}
\theoremstyle{plain}
\newtheorem{theorem}{Theorem}[section] 
\newtheorem*{theorem*}{Theorem}
\newtheorem{lemma}[theorem]{Lemma} 
\newtheorem*{lemma*}{Lemma} 
\newtheorem{corollary}[theorem]{Corollary} 
\newtheorem*{corollary*}{Corollary} 
\newtheorem*{proposition*}{Proposition}
\newtheorem*{conjecture*}{Conjecture}
\newtheorem*{criterion*}{Criterion}
\newtheorem*{assertion*}{Assertion}
\theoremstyle{definition}
\newtheorem{definition}[theorem]{Definition}
\newtheorem*{definition*}{Definition} 
\newtheorem*{condition*}{Condition} 
\newtheorem*{problem*}{Problem} 
\newtheorem*{example*}{Example} 
\newtheorem*{exercise*}{Exercise} 
\newtheorem*{question*}{Question} 
\newtheorem*{axiom*}{Axiom} 
\newtheorem*{property*}{Property} 
\newtheorem*{assumption*}{Assumption} 
\newtheorem*{hypothesis*}{Hypothesis} 
\theoremstyle{remark} 
\newtheorem{remark}[theorem]{Remark}
\newtheorem*{remark*}{Remark}
\newtheorem*{note*}{Note}
\newtheorem*{notation*}{Notation}
\newtheorem*{claim*}{Claim}
\newtheorem*{summary*}{Summary}
\newtheorem*{acknowledgment*}{Acknowledgment}
\newtheorem*{case*}{Case}
\newtheorem*{conclusion*}{Conclusion}
\begin{document}	
\title{%
Bayesian inversion for electromyography using low-rank tensor formats
}
\author{%
Anna R{\"o}rich\thanks{%
Institute~of~Applied~Analysis~and~Numerical~Simulation, University~of~Stuttgart, Allmandring~5b, 70569~Stuttgart, Germany\newline%
\href{mailto:anna.roerich@ians.uni-stuttgart.de}{anna.roerich@ians.uni-stuttgart.de}, \href{mailto: dominik.goeddeke@ians.uni-stuttgart.de}{dominik.goeddeke@ians.uni-stuttgart.de}%
}%
\and%
Tim A.\@ Werthmann\thanks{%
Institut~f{\"u}r~Geometrie~und~Praktische~Mathematik, RWTH~Aachen~University, Templergraben~55, 52056~Aachen, Germany\newline%
\href{mailto:werthmann@igpm.rwth-aachen.de}{werthmann@igpm.rwth-aachen.de}, \href{mailto:lgr@igpm.rwth-aachen.de}{lgr@igpm.rwth-aachen.de}%
}%
\and%
Dominik G{\"o}ddeke\footnotemark[1]
\thanks{Stuttgart~Center~for~Simulation~Science, University~of~Stuttgart, Pfaffenwaldring~5a, 70569~Stuttgart, Germany}
\and%
Lars Grasedyck\footnotemark[2]%
}%
\date{\today}
\maketitle
\begin{abstract}
The reconstruction of the structure of biological tissue using electromyographic data is a non-invasive imaging method with diverse medical applications.
Mathematically, this process is an inverse problem. 
Furthermore, electromyographic data are highly sensitive to changes in the electrical conductivity that describes the structure of the tissue.
Modeling the inevitable measurement error as a stochastic quantity leads to a Bayesian approach. 
Solving the discretized Bayesian inverse problem means drawing samples from the posterior distribution of parameters, e.g., the conductivity, given measurement data.
Using, e.g., a Metropolis-Hastings algorithm for this purpose involves solving the forward problem for different parameter combinations which requires a high computational effort.
Low-rank tensor formats can reduce this effort by providing a data-sparse representation of all occurring linear systems of equations simultaneously and allow for their efficient solution.
The application of Bayes' theorem proves the well-posedness of the Bayesian inverse problem.
The derivation and proof of a low-rank representation of the forward problem allow for the precomputation of all solutions of this problem under certain assumptions, resulting in an efficient and theory-based sampling algorithm.
Numerical experiments support the theoretical results, but also indicate that a high number of samples is needed to obtain reliable estimates for the parameters.
The Metropolis-Hastings sampling algorithm, using the precomputed forward solution in a tensor format, draws this high number of samples and therefore enables solving problems which are infeasible using classical methods.
\par\vspace\baselineskip\noindent
\emph{Keywords}: inverse problem, parameter-dependent problem, Metropolis-Hastings algorithm, hierarchical Tucker format, EMG
\end{abstract}
\section{Introduction}\label{sec:Introduction}
In clinical applications, surface electromyographic~(EMG) data are a widely used source of information about the muscular and nervous system. 
For example, EMG data are a valuable source of information in neurology, movement analysis, rehabilitation medicine or the development of biofeedback techniques.
To this end, different models have been developed to simulate and understand EMG data, see, e.g.,~\cite{R15}.

Using EMG measurements, we focus on reconstructing the intracellular conductivity of biological tissue. 
As the conductivity provides information about the structure of this tissue, we make an important step towards a non-invasive and radiation-free imaging method. 
Furthermore, reliable estimates on the conductivity from patient-specific EMG measurements can advance the personalized treatment. 

Computed EMG data is, however, highly sensitive to changes in the conductivity, see, e.g.,~\cite{R16}. 
In addition, reconstructing data from (surface) measurements is an inverse problem~\cite{R10}. 
Since the measurement error is unknown, we model it as a stochastic quantity and include it into the EMG model.
This results in a probabilization of the whole EMG model. 
Consequently, the solution of the inverse EMG problem also becomes probabilistic. 

For solving this probabilistic inverse problem, in Section~\ref{sec:EMG_model}, we use a Bayesian ansatz, cf.~\cite{R2,R11}, that searches for the probability distribution of the parameters for given measurements, the so-called \emph{posterior distribution}. 
This ansatz has the advantage that the posterior distribution quantifies the uncertainty within instances of the reconstructed parameters. 

Discretizing the posterior distribution means drawing a finite number of samples from the posterior which includes solving the (discrete) forward EMG problem for different parameter samples to check the fidelity of each sample. 

As solving the forward EMG problem is expensive using classical methods, we aim at precomputing the solution of the forward problem for all parameters at the same time.
This results in a parameter-dependent linear system of equations, i.e., \( A(p) \phi(p) = b(p) \) for an operator \(A\), a solution \(\phi \), and a right-hand side \(b\) depending on parameters \(p = (p^{(1)}, p^{(2)}, \dots, p^{(d)})\). 
After discretizing the parameters in the sense that we allow each parameter \(p^{(j)}\), \(j = 1,\ldots,d\), to take \(n\) different values from its domain, solving the linear system for every combination of parameters implies solving \(n^d\) linear systems. 
This exponential scaling in the dimension \(d \) of the parameter space is commonly known as the \emph{curse of dimensionality} which renders classical methods for \(d \gg 2\) infeasible.

To represent these parameter-dependent linear systems, we use low-rank tensor formats, cf.~\cite{Grasedyck2013,Hackbusch2012}, which we recapitulate in Section~\ref{sec:repParameterProblems}.
Solving these linear systems within these formats allows us to evaluate the parameter-dependent forward problem fast. 

In particular, our main contributions to solve this Bayesian inverse EMG problem and to represent the forward problem in a data-sparse way using low-rank tensor formats are:
\begin{itemize}
\item We prove the well-posedness of our particular Bayesian inverse EMG problem in Section~\ref{sec:Bayes_inverse} and show that modeling the measurement error leads to a natural regularization of the inverse problem.
\item We derive a discretization of the parameter-dependent operator and the right-hand side in Section~\ref{sec:guiding_example} and prove a data-sparse representation of this discretization using low-rank tensor formats.
This method allows us to solve the parameter-dependent linear system fast.
\item Combining this data-sparse representation with a standard Metropolis-Hastings algorithm in Section~\ref{sec:algos} allows us to solve the Bayesian inverse EMG problem efficiently.
\end{itemize}
In Section~\ref{sec:NumExp}, we present our numerical experiments that support our theoretical analysis and indicate that the Markov chain constructed by the Metropolis-Hastings algorithm using low-rank tensor formats behaves like the Markov chain constructed by a standard algorithm.
Further, we observe a speedup of more than \(600\) using low-rank tensor formats compared to a standard algorithm.

In Section~\ref{sec:relatedwork}, we discuss some related work, and in Section~\ref{sec:conclusion}, we conclude that mathematical theory and an efficient representation of the parameter-dependent solution, which allows us to generate samples fast, leads to an efficient algorithm to solve the Bayesian inverse problem.

\section{The Bayesian inverse electromyographic problem}\label{sec:EMG_model}
In order to define our Bayesian inverse EMG problem, we briefly discuss the structure of skeletal muscles and summarize a forward model of surface EMG signals in the following.

A skeletal muscle is composed of bundles of cells, the so-called muscle fibers. 
These muscle fibers are the active contractile tissue of a body that react to electrical stimuli.
To model
EMG signals, we follow the physical structure of a skeletal muscle beginning with the electrical behavior of a single muscle fiber and then describing the electrical behavior of a skeletal muscle by assembling the muscle fibers.

An electrical stimulus from the spinal cord influences the chemo-electrical behavior of the innervated muscle fibers \(D_{\text{F},j} \subseteq \mathbb{R}\), \(j =1,\ldots, N_{\text{MF}}\), for \(N_{\text{MF}}\in\mathbb{N}\) muscle fibers.
These electrical fluctuations travel along the muscle fibers as action potentials~(APs), propagate through the muscle,
and are measured at \(M\in\mathbb{N}\) measuring points summarized in \(\mathbf{x} \in \mathbb{R}^{M \times 3}\). 

We apply the widely used model by Rosenfalck~\cite{R8} to model the muscle fiber AP\@:
\begin{equation}\label{eq:Rosenfalck}
	v_{\text{m},j}(s) = r_{1,j} s^3 \exp(-r_{2,j} s) - r_{3,j} \quad \text{ for }s \in D_{\text{F},j},\ j = 1,\ldots,N_{\text{MF}}. 
\end{equation}
Here, \(r_{1,j},r_{2,j},r_{3,j} \in \mathbb{R}\) are known, fixed constants, and the spatial coordinate \(s\) can be rewritten as \(s = u_j t\) using the AP velocities \(u_j\) and time \(t\). 

To assemble a three-dimensional skeletal muscle \(D_\text{M} \subseteq \mathbb{R}^3\) from the one-dimensional muscle fibers \(D_{\text{F},j} \subseteq \mathbb{R}\), a transfer operator is needed.
Thus, we introduce the smoothing operator \(S: D_{\text{F},j} \to \mathbb{R}^3\) with
\begin{equation}\label{eq:smoothing_operator}
	S(v_{\text{m},j})(x) = v_{\text{m},j}(\pi_j(x)) \exp\left(-\frac\beta2\norm{x - \pi_j(x)}_{\mathbb{R}^3}^2\right) ,
\end{equation}
where \( \beta \in \mathbb{R} \) is a smoothing parameter and \(\pi_j: D_\text{M} \to D_{\text{F},j}\) is the orthogonal projection of a muscle tissue point \(x \in D_\text{M}\) onto the muscle fiber \(D_{\text{F},j}\) with starting point \(y_j \in \mathbb{R}^3\) and direction \(\vec{d}_j \in \mathbb{R}^3\). 
Note that the muscle fiber directions \(\vec{d}_j\) in general depend on \( x \in \mathbb{R}^3 \) and are known for the forward problem, e.g., through a medical imaging technique.
The projection reads
\begin{equation}\label{eq:projection}
	\pi_j(x) = y_j + \frac{{(x - y_j)}^\top \vec{d}_j}{\vec{d}_j^\top \vec{d}_j}\vec{d}_j .
\end{equation}
Applying the smoothing operator to the muscle fibers yields \( \cup_{j=1}^{N_{\text{MF}}} S(D_{\text{F},j}) = D_\text{M} \), and we obtain the membrane potential \( V_\text{m}(x) = \sum_{j=1}^{N_{\text{MF}}} S(v_{\text{m},j})(x) \).

The bidomain equation, as stated in~\cite{R15}, models the propagation of the membrane potential \(V_\text{m}\) through a skeletal muscle by
\begin{equation}\label{eq:main_eq}
     \nabla\cdot( (\sigma_\text{i} + \sigma_\text{e})\nabla\phi_\text{e} ) = -\nabla\cdot(\sigma_\text{i}\nabla V_\text{m}) \quad \text{ in }  D_\text{M} , 
\end{equation}
where \(\phi_\text{e}\) denotes the extracellular electrical potential, and \(\sigma_\text{i}\), \(\sigma_\text{e}\) are the intra- and extracellular electrical conductivities.
Additionally, no-flow boundary conditions are introduced at the domain boundary.
A zero-mean integral condition is used to ensure uniqueness of the solution.

The above model can easily be extended by the electrophysiology of surrounding connective tissue and bones, and a model of force generation and the corresponding continuum mechanics, see~\cite{R15} and the references therein.
Within our setting, the muscle geometry and the structure of the tissue remain unchanged in time.

Note that we model the conductivities as matrices, e.g., \(\sigma_\text{i} \in \mathbb{R}^{3 \times 3}\),
where each matrix entry \({(\sigma_\text{i})}_{j,k}\) quantifies the conductivity of the tissue in the \(x_j\)-\(x_k\)-direction for \(j,k = 1,2,3\).
In particular, the eigenvector of \(\sigma_\text{i}\) that belongs to the largest eigenvalue represents the orientation of the underlying muscle fiber, and the largest eigenvalue corresponds to the longitudinal conductivity of the underlying muscle fiber.
Note that the conductivity of a muscle fiber in transversal direction is much smaller. 
This relation enables us to draw conclusions about the structure of muscular tissue from its intracellular conductivity.
To verify our ansatz described in the following sections, we restrict ourselves to diagonal conductivity matrices, i.e., the corresponding eigenvectors are the unit vectors \(\vec{e}_j \in \mathbb{R}^3\) for \(j=1,2,3\).
Consequently, the muscle fiber direction is one of these unit vectors.

A reasonable assumption on \(\sigma_\text{i}\) is that it is bounded, i.e., there exist constants \(s_{-} > 0\) and \(s_{+} < \infty \) such that \( s_{-} \le \sigma_\text{i} \le s_{+} \) holds componentwise.
Physically this corresponds to the tissue neither being fully insulating nor superconducting. 
Formalizing these considerations leads to the assumption \(p \coloneqq \left( {(\sigma_\text{i})}_{1,1}, {(\sigma_\text{i})}_{2,2}, {(\sigma_\text{i})}_{3,3} \right) \in {[s_{-},s_{+}]}^3 \eqqcolon \mathcal{J}\).

For simplicity, we encapsulate the above models in the definition of the observation operator
\begin{equation}\label{eq:observation_operator}
      \mathcal{G}_{\mathbf{x}} : \mathcal{J} \to \mathbb{R}^{M} \quad\text{ with } p \mapsto \phi(\mathbf{x}) ,
\end{equation}
which maps the diagonal entries \(p\) of a given intracellular conductivity \(\sigma_\text{i}\) to the calculated electrical potential \( \phi(\mathbf{x})\) at measuring points \(\mathbf{x} \in \mathbb{R}^{M \times 3}\). 

To complete the forward EMG model, we include the inevitable measurement error which is unknown but is usually assumed to be additive and to follow a normal distribution. 
Hence, the measurement error is modeled as a random variable \(\eta: \Omega \to \mathbb{R}^{M}\) on a complete probability space \((\Omega,\mathcal{F},P)\) with \(\eta\sim \mathcal{N}(0,\Xi)\) and covariance matrix \(\Xi = \operatorname{diag}(\xi,\ldots,\xi)\in\mathbb{R}^{M\times M}\). 
Adding the measurement error to~\eqref{eq:observation_operator} yields the model for EMG data
\begin{equation}\label{eq:measurement_error}
    \phi_\text{EMG}^\text{comp}(p) = \phi_\text{EMG}^\text{comp}(p,\mathbf{x},\omega) \coloneqq \mathcal{G}_\mathbf{x}(p) + \eta(\omega) \in\mathbb{R}^{M} .
\end{equation}

Solving~\eqref{eq:measurement_error} for \(p\), as in the inverse problem setting, shows that \(p\) must be a random variable as well. 
For emphasizing the randomness of \(p\), we write \(p = p(\omega)\).

A naive inversion of the probabilistic forward problem would be to search for a \(p(\omega) \in \mathcal{J}\) such that \(\phi_\text{EMG}^\text{comp}(p(\omega)) = \phi_\text{EMG}^\text{meas}\) for given measurements \(\phi_\text{EMG}^\text{meas}\in\mathbb{R}^{M}\). 
This problem formulation searches for particular realizations of the random variable \(p\) that, however, misrepresents the behavior of the probabilistic inverse EMG problem.
Hence, we need a more appropriate problem formulation.

We consider a function space Bayesian formulation which aims at calculating the probability distribution of \(p\) for given data \(\phi_\text{EMG}^\text{meas}\). 

To follow this approach, we assume that the entries of \(p\) are uncorrelated and equip \(\mathcal{J}\) with the product \(\sigma \)-algebra \( \Theta \coloneqq \bigotimes_{j = 1}^3\mathcal{B}({[s_{-},s_{+}]})\), where \(\mathcal{B}({[s_{-},s_{+}]})\) is the Borel-\(\sigma \)-algebra on \([s_{-},s_{+}]\). 
Subsequently, the product probability measure \( \rho \coloneqq \bigotimes_{j=1}^3 \mathrm{d}\lambda_j \) is defined on the measurable space \((\mathcal{J},\Theta)\) with \(\mathrm{d}\lambda_j\) denoting the normalized Lebesgue measure on \({[s_{-},s_{+}]}\), similar to~\cite{R3,R5}. 
Note that \(\rho \) is the probability law of the random variable \(p\), since the diagonal entries \(p\) of the intracellular conductivity \( \sigma_{\text{i}} \) are uncorrelated.
The Lebesgue measure indicates that the entries of \(p\) are uniformly distributed on \([s_{-},s_{+}]\).
In the Bayesian context, \(\rho \) is called the \emph{prior measure} or short \emph{prior}, because it describes the behavior of \(p\) prior to having any knowledge about the conductivity, e.g., from measurements.

The Bayesian inverse EMG problem searches for the conditioned probability distribution \(\rho^\text{EMG}\) of \(p\) given EMG measurements \(\phi_\text{EMG}^\text{meas}\).
We prove the existence of the posterior distribution \(\rho^\text{EMG}\) in Section~\ref{sec:Bayes_inverse}.

For solving our Bayesian inverse EMG problem, we use a Metropolis-Hastings algorithm, see, e.g.,~\cite{norris_1997}.
A Metropolis-Hastings algorithm is an acceptance-rejection algorithm that draws samples from the posterior distribution by solving the EMG forward problem for different realizations of \(p\) and comparing the results. 
If the proposal is accepted by an acceptance strategy \(a\), it becomes part of a Markov chain. 
Otherwise, the old sample will be kept and a new proposal will be drawn.

In~\cite{R2}, the acceptance strategy \( a(p, \tilde{p}) \coloneqq \min\lbrace 1, \exp(\Phi(p) - \Phi(\tilde{p}))\rbrace \) with the potential \(\Phi:\mathcal{J} \times \mathbb{R}^{M} \to \mathbb{R}^{}\) defined by
\begin{equation}\label{eq:def_potential}
\Phi(p,\phi_\text{EMG}^\text{meas}) \coloneqq \frac12 \norm{ \phi_\text{EMG}^\text{meas} - \mathcal{G}_{\mathbf{x}}(p) }_\Xi^2 - \frac12 \norm{\phi_\text{EMG}^\text{meas}}_\Xi^2
\end{equation}
and \(\Xi \)-norm \(\norm{ \mathbf{v} }_\Xi \coloneqq \norm{ \Xi^{-\frac{1}{2}}\mathbf{v} }_{\mathbb{R}^{M}}\) for all \(\mathbf{v} \in \mathbb{R}^M\) was derived such that the resulting Markov chain is reversible with respect to the prior \(\rho \). This yields the convergence of the Metropolis-Hastings algorithm.

We rewrite the acceptance strategy:
\begin{align*}
     a(p, \tilde{p}) &= \min\big\lbrace1, \exp\big(\Phi(p) - \Phi(\tilde{p})\big)\big\rbrace \\
     &= \min\left\lbrace1, \frac{\exp\big(\frac12 \norm{ \phi_\text{EMG}^\text{meas} - \mathcal{G}_{\mathbf{x}}(p) }_\Xi^2 \big)}{\exp\big(\frac12 \norm{ \phi_\text{EMG}^\text{meas} - \mathcal{G}_{\mathbf{x}}(\tilde{p}) }_\Xi^2 \big)}\right\rbrace \\
     &\quad \begin{cases}
      = 1 \ \text{if }\ \norm{ \phi_\text{EMG}^\text{meas} - \mathcal{G}_{\mathbf{x}}(\tilde{p}) }_\Xi^2 \le \norm{ \phi_\text{EMG}^\text{meas} - \mathcal{G}_{\mathbf{x}}(p) }_\Xi^2 ,\\
      < 1 \ \text{otherwise} .
     \end{cases}
\end{align*}
Consequently, a new proposal will always be accepted, if it produces a smaller error than the last accepted sample, and will otherwise be rejected with probability \(1-a\), i.e., the old sample will be kept with probability \(1-a\). 

\section{Low-rank tensor formats}\label{sec:repParameterProblems}
Evaluating the acceptance strategy in every step of the Metropolis-Hastings algorithm requires the evaluation of the observation operator \(\mathcal{G}_{\mathbf{x}}\), i.e., the solution of the forward EMG problem, for a new set of diagonal entries \(p\) of the intracellular conductivity. 
Consequently, we need a way to compute these solutions fast.
We use low-rank tensor formats to accelerate these computations and motivate these formats using an example, analog to~\cite{Grasedyck2020}.

We consider the scaling of a discrete operator \(A_\text{h}\) by a parameter \(p_\text{h}(j)\), \( j = 1,\dots,n\) with \( n \in \mathbb{N} \), i.e., \(p_\text{h}(j) A_\text{h}\).
We assume that the \righthandside{} \( b_\text{h} \) is constant for all \(p_\text{h}(j)\). 
Using classical methods, we would need to solve the following linear system:
\begin{equation*}
	\begin{pmatrix}
			p_\text{h}(1) A_\text{h} & 0 & \hdots & 0 \\
			0 & p_\text{h}(2) A_\text{h} & \ddots & \vdots \\
			\vdots & \ddots & \ddots & 0 \\
			0 & \hdots  & 0 & p_\text{h}(n) A_\text{h}
	\end{pmatrix}
	\begin{pmatrix}
			\phi_\text{h}(p_\text{h}(1)) \\
			\phi_\text{h}(p_\text{h}(2)) \\
			\vdots \\
			\phi_\text{h}( p_\text{h}(n))
	\end{pmatrix}
	=
	\begin{pmatrix}
			b_\text{h} \\
			b_\text{h} \\
			\vdots \\
			b_\text{h}
	\end{pmatrix}.
\end{equation*} 
Using the Kronecker product to reformulate this system
\begin{equation*}
	\left( \diag(p_\text{h}(1), p_\text{h}(2), \dots, p_\text{h}(n)) \otimes A_\text{h} \right) \phi_\text{h}(p_\text{h}) = {\left(1, \dots, 1\right)}^\top \otimes b_\text{h} ,
\end{equation*}
we achieve a data-sparse representation.
We use a generalization of this representation to derive a data-sparse representation of the parameter-dependent forward EMG problem which can be interpreted as the \emph{CANDECOMP/PARAFAC}, or short CP, \emph{representation} introduced in~\cite{Carroll1970,Harshman1970}.

\begin{definition}[CP vector and CP operator]\label{def:CPDecomposition}
A \emph{CP representation} of a tensor \(\mathbf{b}\in \R^{n_1 \times \cdots \times n_d}\), with \emph{representation rank} \(r \in \N_0\), is defined as
\begin{equation}\label{eq:CPDecomposition}
\mathbf{b} = \sum_{k = 1}^r \bigotimes_{\ell = 1}^d b_{k}^{\left( \ell \right)} \quad \text{ with } b_{k}^{\left( \ell \right)} \in \R^{n_{\ell}} .
\end{equation}
We call each \(\ell \in \mathcal{D} \coloneqq \lbrace 1, \dots, d \rbrace \) \emph{mode} and \(d\) the \emph{dimension}. 
The minimal \(r\), such that~\eqref{eq:CPDecomposition} holds, is called the \emph{CP rank} of \(\mathbf{b}\) and in this case~\eqref{eq:CPDecomposition} is called the \emph{CP decomposition} of \(\mathbf{b}\).
We call a tensor of the form~\eqref{eq:CPDecomposition} a \emph{CP vector}.

A CP representation of a tensor operator \(\mathbf{A} \) from \( \R^{n_1 \times \cdots \times n_d}\) to \( \R^{n_1 \times \cdots \times n_d}\), with representation rank \(r \) and dimension \(d\), is defined as
\begin{equation}\label{eq:CPDecompositionOperator}
\mathbf{A} = \sum_{k = 1}^r \bigotimes_{\ell = 1}^d A_{k}^{\left( \ell \right)} \quad \text{ with } A_{k}^{\left( \ell \right)} \in \R^{n_{\ell} \times n_{\ell} } .
\end{equation}
We call a tensor of the form~\eqref{eq:CPDecompositionOperator} a \emph{CP operator}.
\end{definition}
Note that the \( b_k^{(\ell)} \) in~\eqref{eq:CPDecomposition} are vectors and that the \( A_k^{(\ell)} \) in~\eqref{eq:CPDecompositionOperator} are matrices.
Therefore, a CP vector \( \mathbf{b} \) is a sum of rank \(r\) Kronecker products of \(d\) vectors, and a CP operator \( \mathbf{A} \) is a summation over Kronecker products of matrices.
Thus, using Definition~\ref{def:CPDecomposition}, there exist CP vectors and CP operators of any dimension and rank.

A big advantage of the CP format is the data-sparsity in case of a small representation rank \(r\), since a tensor \(\mathbf{b} \in \R^{n_1 \times \cdots \times n_d}\) of the form~\eqref{eq:CPDecomposition} has storage cost in \(\mathcal{O} (r \sum_{\ell=1}^d n_{\ell}  ) \approx \mathcal{O}(r d n)\) compared to \(\mathcal{O}( \prod_{\ell=1}^d n_{\ell} ) \approx \mathcal{O}(n^d)\) with \( n = \max_{\ell \in \mathcal{D}} n_\ell \).

Therefore it is desirable to represent the operator and the right-hand side of the forward EMG problem data-sparse using low-rank tensor formats.
To compute the solution of the discrete forward EMG problem, we need to solve linear systems within low-rank tensor formats.
An algorithm that can calculate the inverse of an operator with rank \(r>1\) in a direct way is unknown.

Consider, e.g., a CP operator \( \mathbf{A} \) of dimension \(1 \) and rank \( 2 \), i.e., \( \mathbf{A} = A_1 + A_2 \), with \(A_1, A_2 \in \mathbb{R}^{n \times n} \).
Then, finding a direct inverse of \( \mathbf{A} \) in the CP format means finding matrices \(C_j\) and \(D_j\) such that \( \mathbf{A}^{-1} = {(A_1 + A_2)}^{-1} = \sum_{j=1}^J C_j^{-1} + D_j^{-1} \) should hold for some rank \(J \in \mathbb{N}\).
Since such a property is unknown even for matrix summations~\cite{Miller1981}, it is also unknown in the more general tensor case.

We therefore need iterative solvers and thus arithmetic operations within low-rank tensor formats. 
These arithmetic operations often lead to an increase of the representation rank.

Consider, e.g., a CP operator of dimension \( 2 \) and rank \(3 \), i.e., \( \mathbf{A} = \sum_{i=1}^3 A_i^{(1)} \otimes A_i^{(2)} \) and a CP vector of dimension \(2\) and rank \(2 \), i.e., \( \mathbf{x} = \sum_{j=1}^2 x_j^{(1)} \otimes x_j^{(2)} \).
Then, the application of \( \mathbf{A} \) to \( \mathbf{x} \) yields \( \mathbf{A} \mathbf{x} = (\sum_{i=1}^3 A_i^{(1)} \otimes A_i^{(2)}) ( \sum_{j=1}^2 x_j^{(1)} \otimes x_j^{(2)}) = \sum_{i=1}^3 \sum_{j=1}^2 A_i^{(1)}  x_j^{(1)} \otimes  A_i^{(2)} x_j^{(2)} = \sum_{k=1}^6 y_k^{(1)} \otimes y_k^{(2)}\) with \( y_k^{(\nu)} \coloneqq A_i^{(\nu)}x_j^{(\nu)} \) for \( k = i + 3(j-1) \) and \(\nu = 1, 2\).
Therefore \( \mathbf{Ax} \) is a CP vector of representation rank \( 6 ~(= 2 \cdot 3)\).

The above example shows that we need a truncation of a tensor to lower rank, i.e., an approximation with a tensor of lower rank.
To guarantee the convergence of iterative methods, we have to guarantee that the truncation error is small enough, cf.~\cite{Hackbusch2008}.

The set of CP tensors of rank \(r\) is, however, not closed which makes the approximation of a CP tensor of rank \(r\) an ill-posed problem, cf.~\cite{deSilva2008}. 
Therefore, we cannot guarantee that the truncation error will be small enough to yield convergence of the iterative method.
To overcome this drawback, we use the \emph{hierarchical Tucker} format to represent and compute the solution of a linear system.

The general idea of the hierarchical Tucker format, which was first introduced in~\cite{Hackbusch2009} and further analyzed in~\cite{Grasedyck2010}, is to define a hierarchy among the modes \(\mathcal{D} = \left\lbrace 1, \dots, d \right\rbrace \). 
To do so, we define the so-called \emph{dimension tree} analogously to~\cite[Definition 3.1]{Grasedyck2010}.
\begin{definition}[dimension tree]\label{def:dimensionTree}
A \emph{dimension tree} \(\mathcal{T}\) for dimension \(d \in \N \) is a binary tree with nodes labeled by non-empty subsets of \(\mathcal{D}\). 
Its root is labeled with \(\mathcal{D}\), each leaf node is labeled with a single-element subset \(\treesymbol = \left\lbrace \ell \right\rbrace \subseteq \mathcal{D}\), and each inner node is labeled with the disjoint union of its two children.
We will identify a node with its label \( \treesymbol \) and therefore write \( \treesymbol \in \mathcal{T} \).
\end{definition}
Figure~\ref{fig:dtee} shows an example of a dimension tree for \(d = 4\).
The labels of dimension trees lead to the corresponding \emph{matricization} for each node which we define as in~\cite[Definition \(3.3\)]{Grasedyck2010}:
\begin{definition}[matricization and vectorization]\label{def:matricization}
Let \(\boldsymbol{\phi} \in \mathbb{R}^{n_1 \times \cdots \times n_d}\), \(\treesymbol \subseteq \mathcal{D}\) with \(\treesymbol \neq \emptyset \), and \(g \coloneqq \mathcal{D} \setminus \treesymbol \). 
The \emph{matricization} of \(\boldsymbol{\phi}\) corresponding to \(\treesymbol \) is defined as \(\boldsymbol{\phi}^{(\treesymbol)} \in \mathbb{R}^{n_{\treesymbol} \times n_{g}}\), where \(n_{\treesymbol} \coloneqq \prod_{\ell \in \treesymbol} n_\ell \) and \(n_g \coloneqq \prod_{\ell \in g} n_\ell \), with \( \boldsymbol{\phi}^{(\treesymbol)} [  {(i_j)}_{j \in \treesymbol},{(i_j)}_{j \in g} ] \coloneqq \boldsymbol{\phi} [ i_1,\dots,i_d ] \) for all \( i={(i_j)}_{j \in \mathcal{D}} \).
In particular, \(\boldsymbol{\phi}^{(\mathcal{D})} \in \mathbb{R}^{n_1 \cdots  n_d}\) holds, which can also be interpreted as the \emph{vectorization} of \( \boldsymbol{\phi} \).
\end{definition}
A matricization can be interpreted as an unfolding of the tensor as illustrated in Figure~\ref{fig:matricization}.
\begin{figure}
\centering
\begin{minipage}{0.45\textwidth}
\centering
\Tree[.{\(\boxed{ \left\lbrace 1, 2, 3, 4 \right\rbrace }\)} [.{\(\boxed{ \left\lbrace 1, 2 \right\rbrace }\)} {\(\boxed{ \left\lbrace 1 \right\rbrace }\)} {\(\boxed{ \left\lbrace 2 \right\rbrace }\)} ] [.{\(\boxed{ \left\lbrace  3, 4 \right\rbrace}\)} {\(\boxed{ \left\lbrace 3 \right\rbrace}\)} {\(\boxed{ \left\lbrace 4 \right\rbrace }\)} ] ] 
\caption{Dimension tree for dimension \(d=4\).}\label{fig:dtee}
\end{minipage}
\hfill
\begin{minipage}{0.45\textwidth}
\centering
\begin{tikzpicture}[scale=4.0]
\pgfmathsetmacro{\cubex}{0.4}
\pgfmathsetmacro{\cubey}{0.4}
\pgfmathsetmacro{\cubez}{0.4}
\pgfmathsetmacro{\smallcubex}{0.05}
\pgfmathsetmacro{\smallcubey}{0.05}
\pgfmathsetmacro{\smallcubez}{0.05}
\pgfmathsetmacro{\largecubex}{0.8}
\pgfmathsetmacro{\largecubey}{0.8}
\pgfmathsetmacro{\largecubez}{0.8}

\draw[black] (0,0,0) -- ++(0,0,-\cubez) -- ++(0,-\cubey,0) --
		++(0,0,\cubez) -- cycle;
\draw[black] (0,0,0) -- ++(-\cubex,0,0) -- ++(0,0,-\cubez) --
		++(\cubex,0,0) -- cycle;
		
\draw[black,fill=blue!25] (0,0.15,0.15) -- ++(-\smallcubex,0,0) --
		++(0,-\cubey,0) -- ++(\smallcubex,0,0) -- cycle;
\draw[black,fill=blue!25] (0,0.15,0.15) -- ++(0,0,-\smallcubez) --
		++(0,-\cubey,0) -- ++(0,0,\smallcubez) -- cycle;
\draw[black,fill=blue!25] (0,0.15,0.15) -- ++(-\smallcubex,0,0) --
		++(0,0,-\smallcubez) -- ++(\smallcubex,0,0) -- cycle;
		
\draw[black] (0,0,0) -- ++(-\cubex,0,0) -- ++(0,-\cubey,0) --
		++(\cubex,0,0) -- cycle;
\draw[black,fill=blue!25] (0.7,0.105,0) -- ++(-\smallcubex,0,0) --
		++(0,-\cubey ,0) -- ++(\smallcubex,0,0) -- cycle;
\draw[black] (1.2,0.105,0) -- ++(-\largecubex,0,0) -- ++(0,-\cubey,0) --
		++(\largecubex,0,0) -- cycle;

\node(end) at (0.7,0.105){};
\node(start) at  (0.0, 0.15, 0.15){};
\draw[black, ->] (start)  edge [bend left,  above] (end);
		
\end{tikzpicture}
\caption{Visual representation of a matricization.}\label{fig:matricization}
\end{minipage}
\end{figure}
Based on the concept of matricizations the \emph{hierarchical Tucker rank} is defined accordingly to~\cite[Definition \(3.4\)]{Grasedyck2010}:
\begin{definition}[hierarchical Tucker rank]\label{def:HTRank}
Let \(\boldsymbol{\phi} \in \mathbb{R}^{n_1 \times \cdots \times n_d}\) and \( \mathcal{T} \) be a dimension tree. 
The \emph{hierarchical Tucker rank} of \(\boldsymbol{\phi}\) is defined as \( \operatorname{rank}_{ \mathcal{T} }(\boldsymbol{\phi}) \coloneqq {(r_\treesymbol)}_{\treesymbol \in \mathcal{T}} \), where \( r_{\treesymbol} \coloneqq \operatorname{rank}(\boldsymbol{\phi}^{(\treesymbol)} )\) denotes the matrix rank of the matricization \(\boldsymbol{\phi}^{(\treesymbol)}\) for all \(\treesymbol \in \mathcal{T}\).

The set of tensors with hierarchical Tucker rank node-wise bounded by \({(r_{\treesymbol})}_{\treesymbol \in \mathcal{T}}\) is defined as \( \HT ( \mathcal{T} , {(r_\treesymbol)}_{\treesymbol \in \mathcal{T}} ) \coloneqq 
\{  \boldsymbol{\gamma} \in \mathbb{R}^{n_1 \times \cdots \times n_d}  \vert \operatorname{rank}( \boldsymbol{\gamma}^{(\treesymbol)}) \leq r_\treesymbol \text{ for all } \treesymbol \in \mathcal{T}  \} \).
\end{definition}
Using the dimension tree, the concept of matricization, and the hierarchical Tucker rank, one can define the representation of a tensor within the hierarchical Tucker format, cf.~\cite[Definition \(3.6\)]{Grasedyck2010}.
The memory required for a hierarchical Tucker representation, with dimension tree \( \mathcal{T} \) and representation rank \( {(r_\treesymbol)}_{\treesymbol \in \mathcal{T}} \), of a tensor \(\boldsymbol{\phi} \in \mathbb{R}^{n_1 \times \cdots \times n_d}\) for \(n= \max_{\ell \in \mathcal{D}} n_{\ell}\) and \(r= \max_{\treesymbol \in \mathcal{T}} r_\treesymbol \) is given by \(\mathcal{O} \left( rd n+ r^3 d \right)\), cf.~\cite[Lemma \(3.7\)]{Grasedyck2010}.
The existence of a truncation method of a low\hyp{}rank tensor \(\boldsymbol{\phi} \in \HT (\mathcal{T},\, {(r_\treesymbol)}_{\treesymbol \in \mathcal{T}})\) down to lower rank \({(\tilde{r}_\treesymbol)}_{\treesymbol \in \mathcal{T}}\) with an arithmetic cost in \(\mathcal{O}\left( r^2 d n+ r^4 d \right)\) was proven in~\cite{Grasedyck2010}.
The resulting approximation \(\tilde{\boldsymbol{\phi}} \coloneqq \operatorname{truncate}(\boldsymbol{\phi}) \in \HT ( \mathcal{T} , { ( \tilde{r}_\treesymbol ) }_{ \treesymbol \in \mathcal{T}} )\) fulfills the quasi\hyp{}optimal error estimation
\begin{equation*}
\Vert \boldsymbol{\phi} - \tilde{\boldsymbol{\phi}} \Vert \leq \sqrt{2 d - 3} \inf_{ \boldsymbol{\gamma} \in \HT (\mathcal{T}, {(\tilde{r}_\treesymbol)}_{\treesymbol \in \mathcal{T}})} \Vert \boldsymbol{\phi} -  \boldsymbol{\gamma}  \Vert .
\end{equation*}
Further, we can transfer a CP representation of a tensor vector or tensor operator with CP rank \(r\) into a hierarchical Tucker representation with rank node-wise bounded by \(r\), cf.~\cite[Theorem \(11.17\)]{Hackbusch2012}. 
Following this approach, we represent the operator and the \righthandside\ in the hierarchical Tucker format.
For solving parameter-dependent linear problems in the hierarchical Tucker format, we use the preconditioned conjugate gradients~(PCG) method.
In Algorithm~\ref{alg:pcg} the PCG method is briefly introduced similar to~\cite[Algorithm 2]{Kressner2011}.

\begin{algorithm}
\caption{preconditioned conjugate gradients method with truncation.}\label{alg:pcg}
\begin{algorithmic}[1]
\REQUIRE{CP operator \(\AlgA \), CP vector \(\Algb \), CP rank~\(1\) preconditioner \(\AlgM \), initial guess \(\Algphi_{(0)}\) in the hierarchical Tucker format}
\ENSURE{Approximate solution \(\Algphi \) in the hierarchical Tucker format of \(\AlgA \Algphi = \Algb \) }
\STATE{ \( \AlgR_{(0)} = \operatorname{truncate}\left( \Algb - \AlgA  \Algphi_{(0)} \right)\) }
\STATE{ \(\AlgZ_{(0)} =  \AlgM^{-1} \AlgR_{(0)} \) }
\STATE{ \(\AlgP_{(0)} = \AlgZ_{(0)}\) }
\STATE{ \(\AlgQ_{(0)} = \operatorname{truncate}\left( \AlgA \AlgP_{(0)}\right) \) }
\STATE{ \(k = 0\) }
\WHILE{\(\frac{\left\lVert \AlgR_{(k)} \right\rVert }{\left\lVert \Algb \right\rVert} > \varepsilon \) \AND{}\(k < k_{\max}\)}
\STATE{\(\Algphi_{(k+1)} = \operatorname{truncate}\left( \Algphi_{(k)} + \frac{\left\langle \AlgR_{(k)}, \AlgP_{(k)} \right\rangle}{\left\langle \AlgQ_{(k)}, \AlgP_{(k)} \right\rangle} \AlgP_{(k)} \right)\) }
\STATE{ \(\AlgR_{(k+1)} = \operatorname{truncate}\left( \Algb - \AlgA  \Algphi_{(k+1)} \right)\) }
\STATE{ \(\AlgZ_{(k+1)} =  \AlgM^{-1} \AlgR_{(k+1)} \) }
\STATE{ \(\AlgP_{(k+1)} = \operatorname{truncate}\left( \AlgZ_{(k+1)} - \frac{ \left\langle \AlgQ_{(k)}, \AlgZ_{(k+1)} \right\rangle}{\left\langle \AlgQ_{(k)}, \AlgP_{(k)} \right\rangle } \AlgP_{(k)} \right) \) }
\STATE{ \(\AlgQ_{(k+1)} = \operatorname{truncate}\left( \AlgA \AlgP_{(k+1)} \right) \) }
\STATE{ \(k = k + 1\) }
\ENDWHILE{}
\end{algorithmic} 
\end{algorithm}

The PCG method in Algorithm~\ref{alg:pcg} approximates the solution of a parameter-dependent linear system numerically within the hierarchical Tucker format if the tensor operator \(\mathbf{A}\) is positive definite and symmetric.
In~\cite[Lemma 5]{Grasedyck2017} the authors proved that this algorithm converges if the truncation error \(\varepsilon \) is small enough.
Algorithm~\ref{alg:pcg} comprises additions and inner products of two tensors in hierarchical Tucker format which have an arithmetic cost in \( \mathcal{O}(dnr^2 + dr^4) \), application of an operator which has an arithmetic cost in \( \mathcal{O}( dn^2 r) \), and evaluation of an entry of the represented tensor which has an arithmetic cost in \( \mathcal{O}(dr^3) \).
Hence, for small rank \(r\) most of the operations needed for the PCG method scale linearly in the dimension \(d\) and the mode size \(n\), thus yielding an efficient method to solve parameter-dependent linear systems using low-rank tensor formats.

This means that, if we are able to prove the existence of a low-rank representation of the operator and right-hand side of the forward EMG problem, we can compute the solution of the linear system data-sparse and fast within the hierarchical Tucker format.

Finding conditions that guarantee the existence of a \lowrank\ approximation for a given tensor is a research topic of its own~\cite{Bachmayr2017,Dahmen2016,Kressner2016}.
This goes beyond the scope of this article, and we thus assume that the solution of the parameter-dependent EMG forward problem has a low-rank approximation.
This is backed up by the numerical experiments in Section~\ref{sec:NumExp}.
\section{The Bayesian inverse EMG problem}\label{sec:Bayes_inverse}
We present our first main contribution: The proof of the well-posedness of the Bayesian inverse EMG problem discussed in Section~\ref{sec:EMG_model}.
Note that the proof of the well-posedness is valid for any bounded conductivity \(\sigma_\text{i}\) that can be represented through parameters \(p \in \mathcal{J}\) for any parameter space \(\mathcal{J}\). 
For diagonal conductivities these parameters are the diagonal entries of \(\sigma_\text{i}\) and \(\mathcal{J} = {[s_{-},s_{+}]}^3\). 
In the more general case of space-dependent intracellular conductivities, the parameters can be chosen as the coefficients of a Karhun-Loève expansion of \(\sigma_\text{i}(x,\omega)\), see, e.g.,~\cite{R2,R3}. 
The following proof thus holds for both, space-independent and space-dependent, conductivities.

First we prove the existence of the posterior distribution \(\rho^\text{EMG}\) of parameters \( p \) given measurements \(\phi_\text{EMG}^\text{meas}\) for a prior \(\rho \) using the infinite-dimensional version of Bayes' theorem for inverse problems~\cite[Theorem 3.4]{R2}.
\begin{theorem}[Bayes' theorem for our inverse EMG problem]\label{theo:applicability_Bayes}
Let 
\(\mathbb{Q}_0\) and \(\mathbb{Q}_p\) denote the measures with distribution \( \mathcal{N}(0,\Xi)\) and \(\mathcal{N}(\mathcal{G}_{\mathbf{x}}(p),\Xi)\). 
Then,
\begin{enumerate}[label=B.\arabic*]
\item\label{Bayes_theo_1} the scaling factor \(Z \coloneqq \int_{\mathcal{J}} \exp\big( -\Phi(p;\phi_\text{EMG}^\text{meas}) \big)\rho(\mathrm{d}p)\) is positive \(\mathbb{Q}_0\)-almost surely,
\item\label{Bayes_theo_2} the potential \(\Phi : \mathcal{J} \times \mathbb{R}^{M} \to \mathbb{R} \), as defined in~\eqref{eq:def_potential}, is \(\nu_{0}\)-measurable with product measure \(\nu_0(\mathrm{d}p,\mathrm{d}\phi) \coloneqq  \rho(\mathrm{d}p)\mathbb{Q}_0(\mathrm{d}\phi) \),
\item\label{Bayes_theo_3} for \(\phi_\text{EMG}^\text{meas}\) the conditional distribution \(\rho^\text{EMG}\) exists, \(\rho^\text{EMG}\) is absolutely continuous with respect to \(\rho \), and
\begin{equation*}
\frac{\mathrm{d}\rho^\text{EMG}}{\mathrm{d}\rho}(p) = \frac{1}{Z}\exp\big( -\Phi(p;\phi_\text{EMG}^\text{meas}) \big)
\end{equation*}
\(\nu \)-almost surely with the product measure \(\nu(\mathrm{d}p,\mathrm{d}\phi) \coloneqq  \rho(\mathrm{d}p)\mathbb{Q}_p(\mathrm{d}\phi)\).
\end{enumerate}
\end{theorem}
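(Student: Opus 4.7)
The plan is to verify the hypotheses of the infinite\hyp{}dimensional Bayes' theorem from~\cite{R2}, Theorem~3.4, adapted to our finite\hyp{}dimensional parameter space \(\mathcal{J} = {[s_{-},s_{+}]}^{3}\). The three items~\ref{Bayes_theo_1}--\ref{Bayes_theo_3} of the statement correspond precisely to the standard preconditions of that theorem, so the task reduces to establishing (i) continuity of the observation operator \(\mathcal{G}_{\mathbf{x}}\) on the compact set \(\mathcal{J}\), (ii) Borel measurability of the potential \(\Phi\) on \(\mathcal{J} \times \mathbb{R}^{M}\), and (iii) a uniform lower bound on \(\Phi(\cdot, \phi)\) for each fixed \(\phi\).

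The main technical obstacle is step~(i), namely showing that the map \(p \mapsto \mathcal{G}_{\mathbf{x}}(p)\) is continuous. First I would recall that for every \(p \in \mathcal{J}\) the bidomain equation~\eqref{eq:main_eq} is uniformly elliptic, because \(s_{-} > 0\) implies strict positive definiteness of \(\sigma_{\text{i}}+\sigma_{\text{e}}\), so Lax--Milgram yields a unique weak solution \(\phi_{\text{e}}(\cdot; p)\) together with a stability bound depending only on \(s_{-}\) and on the data. A standard subtraction argument applied to two parameter values \(p, p' \in \mathcal{J}\), combined with the same uniform ellipticity and with the Lipschitz dependence of the coefficients \(\sigma_{\text{i}}(p)\) and of the right\hyp{}hand side \(\sigma_{\text{i}}(p)\nabla V_\text{m}\) on \(p\), yields \(\|\phi_{\text{e}}(\cdot; p) - \phi_{\text{e}}(\cdot; p')\|_{H^{1}} \leq C \|p - p'\|\). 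The point evaluation at \(\mathbf{x}\) is continuous on a sufficiently regular solution space, so \(\mathcal{G}_{\mathbf{x}}\) is continuous on \(\mathcal{J}\).

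Once (i) is established, step~(ii) is immediate: as a composition of continuous maps, \(\Phi\) is continuous on \(\mathcal{J} \times \mathbb{R}^{M}\), hence Borel measurable with respect to the product \(\sigma\)-algebra on which \(\nu_{0}\) lives. This gives~\ref{Bayes_theo_2}. For step~(iii) and~\ref{Bayes_theo_1}, fix an arbitrary \(\phi_\text{EMG}^\text{meas} \in \mathbb{R}^{M}\). Since \(\mathcal{J}\) is compact and \(\mathcal{G}_{\mathbf{x}}\) is continuous, the set \(\mathcal{G}_{\mathbf{x}}(\mathcal{J}) \subset \mathbb{R}^{M}\) is compact, so there is a finite constant \(K = K(\phi_\text{EMG}^\text{meas})\) with \(|\Phi(p; \phi_\text{EMG}^\text{meas})| \leq K\) for all \(p \in \mathcal{J}\). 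Therefore \(\exp(-\Phi(p;\phi_\text{EMG}^\text{meas})) \geq e^{-K} > 0\), and integrating against the probability measure \(\rho\) gives \(Z \geq e^{-K} > 0\). Since this bound holds for every \(\phi_\text{EMG}^\text{meas} \in \mathbb{R}^{M}\), it in particular holds \(\mathbb{Q}_{0}\)\hyp{}almost surely, proving~\ref{Bayes_theo_1}.

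Finally, with~\ref{Bayes_theo_1} and~\ref{Bayes_theo_2} verified and with \(\Phi\) being locally bounded on \(\mathcal{J}\) uniformly in \(p\) for each fixed \(\phi\), the hypotheses of~\cite[Theorem~3.4]{R2} are satisfied. Applying that theorem directly yields existence of the conditional distribution \(\rho^\text{EMG}\), its absolute continuity with respect to the prior \(\rho\), and the Radon--Nikodym derivative claimed in~\ref{Bayes_theo_3}. I expect the PDE continuity argument in step~(i) to be the only nontrivial input; the remaining measure\hyp{}theoretic pieces follow routinely from compactness of \(\mathcal{J}\) and continuity of \(\Phi\).
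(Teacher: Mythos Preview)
Your proposal is correct and follows essentially the same approach as the paper: both verify the hypotheses of \cite[Theorem~3.4]{R2} by first establishing continuity of \(\mathcal{G}_{\mathbf{x}}\) (the paper packages this as Lemma~\ref{lemma:wellposed_forward}, whose proof it leaves to the reader, while you sketch the Lax--Milgram subtraction argument) and then deducing measurability of \(\Phi\). The only notable difference is that the paper proves Lipschitz continuity of \(\Phi\) separately in \(p\) and in \(\phi_\text{EMG}^\text{meas}\)---the latter estimate~\eqref{eq:potential_Lipschitz_in_data} being reused in the subsequent well\hyp{}posedness Theorem~\ref{theo:wellposed_Bayes}---whereas you argue joint continuity, which suffices for~\ref{Bayes_theo_2}; your explicit compactness argument for \(Z>0\) is in fact more detailed than the paper's own treatment of~\ref{Bayes_theo_1}.
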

To prove the above theorem, we need the boundedness and Lipschitz continuity of the observation operator as stated in the following lemma:
\begin{lemma}\label{lemma:wellposed_forward}
The observation operator is bounded and Lipschitz continuous with respect to \(p\), i.e., there exist constants \(0 < C,L_p < \infty \) such that
\begin{align}
	\norm{\mathcal{G}_\mathbf{x}(p)}_{\mathbb{R}^M} &\le C \label{eq:G_bounded}\\
	\norm{\mathcal{G}_\mathbf{x}(p_1) - \mathcal{G}_\mathbf{x}(p_2)}_{\mathbb{R}^M} &\le L_p \norm{p_1 - p_2}_\infty \label{eq:G_Lipschitz}
\end{align}
for all \(p,p_1,p_2 \in \mathcal{J}\).
\end{lemma}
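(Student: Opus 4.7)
The plan is to derive both properties directly from the variational formulation of the bidomain equation \eqref{eq:main_eq}, then transfer the resulting Sobolev estimates to the finitely many pointwise evaluations at the measuring points $\mathbf{x}$. The key structural facts I will exploit are that (i) the coefficient matrix $\sigma_\text{i} + \sigma_\text{e}$ is uniformly symmetric positive definite across $\mathcal{J}$, because $\sigma_\text{i}$ is diagonal with entries in $[s_{-},s_{+}]$ by assumption, and (ii) the membrane potential $V_\text{m}$ built from \eqref{eq:Rosenfalck}--\eqref{eq:projection} is smooth and independent of $p$.

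First I would fix the functional framework: work in the zero-mean subspace of $H^1(D_\text{M})$ that encodes the integral side condition, with the bilinear form $a_p(u,v) \coloneqq \int_{D_\text{M}} (\sigma_\text{i}(p) + \sigma_\text{e}) \nabla u \cdot \nabla v \,\mathrm{d}x$ and the linear functional $\ell_p(v) \coloneqq \int_{D_\text{M}} \sigma_\text{i}(p) \nabla V_\text{m} \cdot \nabla v \,\mathrm{d}x$. Since the eigenvalues of $\sigma_\text{i}(p) + \sigma_\text{e}$ are bounded below by a positive constant and above by a finite constant uniformly in $p \in \mathcal{J}$, Poincaré together with Lax–Milgram yields a unique weak solution $\phi_\text{e}(p)$ with $\norm{\phi_\text{e}(p)}_{H^1} \le C_1 \norm{\ell_p}_* \le C_2$, where $C_2$ depends only on $s_{+}$, $\sigma_\text{e}$, and $\norm{V_\text{m}}_{H^1}$. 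This already gives the energy bound uniform in $p$.

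For boundedness \eqref{eq:G_bounded}, I would invoke interior elliptic regularity: since $\sigma_\text{i}(p) + \sigma_\text{e}$ is a constant-coefficient matrix and $V_\text{m}$ is smooth, bootstrapping gives $\phi_\text{e}(p) \in C^\infty$ on any compact subset containing the measuring points, with Sobolev norms controlled by $\norm{\phi_\text{e}(p)}_{H^1}$ and the data. Pointwise evaluation at the finitely many $\mathbf{x}$-points is therefore a bounded functional and the uniform $H^1$-bound transfers to a uniform pointwise bound. For \eqref{eq:G_Lipschitz}, given $p_1, p_2 \in \mathcal{J}$ with solutions $\phi_1, \phi_2$, I would subtract the two PDEs and rearrange to obtain
\begin{equation*}
\nabla \cdot \bigl((\sigma_\text{i}(p_1) + \sigma_\text{e}) \nabla (\phi_1 - \phi_2)\bigr) = -\nabla \cdot \bigl((\sigma_\text{i}(p_1) - \sigma_\text{i}(p_2))\, (\nabla V_\text{m} + \nabla \phi_2)\bigr).
\end{equation*}
Testing with $\phi_1 - \phi_2$, using uniform coercivity on the left and Cauchy–Schwarz on the right, and bounding $\norm{\nabla V_\text{m} + \nabla \phi_2}_{L^2}$ by the uniform estimate from the previous paragraph, gives $\norm{\phi_1 - \phi_2}_{H^1} \le L_1 \norm{\sigma_\text{i}(p_1) - \sigma_\text{i}(p_2)}_F$. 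Since $\sigma_\text{i}$ is diagonal, the Frobenius norm of the difference equals the Euclidean norm of the parameter difference, which is controlled by $\norm{p_1 - p_2}_\infty$ up to a dimensional constant. Elliptic regularity again lifts the $H^1$-Lipschitz bound to a pointwise Lipschitz bound at $\mathbf{x}$.

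The main obstacle is the regularity step needed to justify pointwise evaluation, because $H^1(D_\text{M}) \not\hookrightarrow C(\overline{D_\text{M}})$ in three dimensions, so the naive energy estimate alone does not control $\phi_\text{e}(\mathbf{x})$. This is resolved by exploiting that the coefficients $\sigma_\text{i}(p) + \sigma_\text{e}$ are constant in space, $V_\text{m}$ is smooth through the construction \eqref{eq:Rosenfalck}--\eqref{eq:smoothing_operator}, and the measuring points $\mathbf{x}$ lie in a region where interior regularity applies; the constants produced by the regularity estimate depend only on $s_{-}, s_{+}, \sigma_\text{e}$, the geometry of $D_\text{M}$, and $V_\text{m}$, hence are uniform in $p \in \mathcal{J}$, which is exactly what is needed for both \eqref{eq:G_bounded} and \eqref{eq:G_Lipschitz}.
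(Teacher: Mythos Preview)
Your proposal is correct and follows exactly the route the paper indicates: the paper merely states that ``the proof consists of basic calculations and estimations on the weak form of the deterministic EMG forward problem and is thus left to the reader,'' and your sketch supplies precisely those calculations (Lax--Milgram on the zero-mean subspace, a perturbation argument for Lipschitz dependence, and elliptic regularity to pass from energy to pointwise bounds). One minor point worth tightening: since the data here are \emph{surface} EMG, the measuring points $\mathbf{x}$ typically sit on $\partial D_\text{M}$ rather than in the interior, so you should invoke boundary regularity for the Neumann problem (which is available because the coefficients are constant and $V_\text{m}$ is smooth) rather than purely interior regularity; the argument and the uniformity in $p$ go through unchanged.
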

The proof consists of basic calculations and estimations on the weak form of the deterministic EMG forward problem and is thus left to the reader.

\begin{proof}[Proof of Theorem~\ref{theo:applicability_Bayes}]
The proof is based on the proof of the measurability of the potential \(\Phi \). 
Since~\ref{Bayes_theo_1} and~\ref{Bayes_theo_2} are the assumptions required for the Bayes Theorem in~\cite[Theorem 3.4]{R2} to hold,~\ref{Bayes_theo_3} follows directly once~\ref{Bayes_theo_1} and~\ref{Bayes_theo_2} are proven. 
As the \(\nu_0\)-measurability of \(\Phi \), meaning that \(\Phi \) is \(\rho \)-measurable in \(p\) and \(\mathbb{Q}_0\)-measurable in \(\phi_\text{EMG}^\text{meas}\), follows from the Lipschitz continuity of the corresponding mappings, we show that
\begin{enumerate}
\item \(\Phi \) is Lipschitz continuous with respect to \(p\) and
\item \(\Phi \) is Lipschitz continuous with respect to \(\phi_\text{EMG}^\text{meas}\).
\end{enumerate}
Note that we also need the Lipschitz continuity of \(\Phi \) to prove that the posterior depends continuously on the measurement data in Theorem~\ref{theo:wellposed_Bayes}.
For ease of notations, we introduce the shorthand \(\langle u,v \rangle_\Xi \coloneqq \langle\Xi^{-\frac12}u,\Xi^{-\frac12}v\rangle \) for \(u,v \in \mathbb{R}^{M}\) and neglect the second argument of the potential \(\Phi \).

\begin{enumerate}
\item Let \(p_1,p_2 \in \mathcal{J}\) with \(p_1 \ne p_2\), and (TI) and (HI) denote the triangle and Hölder's inequality. 
Using Lemma~\ref{lemma:wellposed_forward}, we have
\begin{align*}
          &\, |\Phi(p_1) - \Phi(p_2)| \\
        =&\,  \frac12 \left| \langle\mathcal{G}(p_1),\mathcal{G}(p_1)\rangle_\Xi - \langle\mathcal{G}(p_2),\mathcal{G}(p_2)\rangle_\Xi + 2\langle\phi_\text{EMG}^\text{meas},\mathcal{G}(p_2) - \mathcal{G}(p_1)\rangle_\Xi \right|\\
         \underset{\mathclap{\text{(HI)}}}{\overset{\mathclap{\text{(TI)}}}{\le}}&\, \frac12 \left( \norm{\mathcal{G}(p_1)}_\Xi \norm{\mathcal{G}(p_1) - \mathcal{G}(p_2)}_\Xi + \norm{\mathcal{G}(p_1) - \mathcal{G}(p_2)}_\Xi \norm{\mathcal{G}(p_2)}_\Xi \right)\\
          &+ \norm{\phi_\text{EMG}^\text{meas}}_\Xi \norm{\mathcal{G}(p_2) - \mathcal{G}(p_1)}_\Xi \\
          \overset{\mathclap{\eqref{eq:G_bounded}}}{\le}&\, C \norm{\mathcal{G}(p_2) - \mathcal{G}(p_1)}_{\mathbb{R}^{M}}   \overset{\mathclap{\eqref{eq:G_Lipschitz}}}{\le}\, CL_p \norm{p_1 - p_2}_{\infty} .
\end{align*}
\item For \(\phi_1,\phi_2 \in\mathbb{R}^{M}\) with \(\phi_1 \ne \phi_2\) we express the norms in the definition of \(\Phi \) as scalar products obtaining
\begin{align}
         | \Phi(p,\phi_{1}) - \Phi(p,{\phi}_{2}) | &= \frac12 \left| \norm{ \phi_{1} - \mathcal{G}(p) }_\Xi^2 - \norm{\phi_{1}}_\Xi^2 - \norm{ \phi_{2} - \mathcal{G}(p) }_\Xi^2 + \norm{\phi_{2}}_\Xi^2 \right|\nonumber \\
         &= \left| \langle (\phi_2 - \phi_1),\mathcal{G}(p) \rangle_\Xi \right| \overset{\mathclap{\text{(HI)}}}{\le} \norm{\phi_2 - \phi_1}_\Xi \norm{\mathcal{G}(p)}_\Xi\nonumber \\
         &\overset{\mathclap{\eqref{eq:G_bounded}}}{\le} \underset{\eqqcolon L_\phi}{\underbrace{C \norm{\Xi}_\infty^2}} \norm{\phi_1 - \phi_2}_{\mathbb{R}^M} . \label{eq:potential_Lipschitz_in_data}
\end{align}
\end{enumerate}
This concludes the proof.
\end{proof}
The well-posedness of the Bayesian inverse EMG problem also includes the continuity of the posterior \(\rho^\text{EMG}\) with respect to the data \(\phi_\text{EMG}^\text{meas}\). 
Therefore, we need to define a metric on the space of measures.
Similar to~\cite{R2,R3} we choose the \emph{Hellinger metric}.
\begin{definition}[Hellinger metric]
Let \(\mu_1\) and \(\mu_2\) denote two probability measures that are absolutely continuous with respect to a probability measure \(\zeta \). 
The \emph{Hellinger metric} of \(\mu_1\) and \(\mu_2\) is then defined as
\begin{equation*}
     d_\text{Hell}(\mu_1,\mu_2) \coloneqq {\Big( \frac12 \int {\Big( \sqrt{\tfrac{\mathrm{d}\mu_1}{\mathrm{d}\zeta}} - \sqrt{\tfrac{\mathrm{d}\mu_2}{\mathrm{d}\zeta}} \Big)}^2 \,\mathrm{d}\zeta \Big)}^{\frac{1}{2}}.
\end{equation*}
\end{definition}

With the help of the Hellinger metric we now prove the Lipschitz continuity of the posterior \(\rho^\text{EMG}\) with respect to measured EMG data.

\begin{theorem}\label{theo:wellposed_Bayes}
Let
\(\rho^\text{EMG}\) denote the solution of our Bayesian inverse EMG problem given by Theorem~\ref{theo:applicability_Bayes}. 
Then \(\rho^\text{EMG}\) depends Lipschitz continuously on the measured data \(\phi_\text{EMG}^\text{meas}\) with respect to the Hellinger metric. 
This means there exists a positive constant \(L > 0\) such that
\begin{equation}\label{eq:well_posed_hellinger_norm}
   d_\text{Hell}(\rho^\text{EMG}_1,\rho^\text{EMG}_2) \le L \norm{\phi_1 - \phi_2}_\Xi
\end{equation}
holds for all \(\phi_1,\phi_2 \in\mathbb{R}^{M}\) and the posterior distributions \(\rho_1^\text{EMG}\) and \(\rho_2^\text{EMG}\) of \(\sigma \) given \(\phi_1 \) and \(\phi_2 \).
\end{theorem}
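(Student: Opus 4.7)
The plan is to follow the template for well-posedness proofs in the Stuart framework for Bayesian inverse problems, leveraging both the boundedness of $\mathcal{G}_\mathbf{x}$ from Lemma~\ref{lemma:wellposed_forward} and the Lipschitz continuity of $\Phi$ in its second argument already established in~\eqref{eq:potential_Lipschitz_in_data}. First I would invoke Theorem~\ref{theo:applicability_Bayes} to write both posteriors as Radon--Nikodym derivatives of the prior, namely $\mathrm{d}\rho_i^{\text{EMG}}/\mathrm{d}\rho(p) = Z_i^{-1}\exp(-\Phi(p,\phi_i))$ with normalization constants $Z_i = \int_\mathcal{J}\exp(-\Phi(p,\phi_i))\,\rho(\mathrm{d}p)$ for $i=1,2$. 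Substituting these densities into the Hellinger distance with reference measure $\zeta = \rho$, expanding the square, and adding and subtracting the mixed term $Z_1^{-1/2}\exp(-\Phi(p,\phi_2)/2)$ splits the result via $(a+b)^2 \le 2a^2+2b^2$ into a sum $I_1 + I_2$, where
\begin{equation*}
I_1 = Z_1^{-1}\int_\mathcal{J}\bigl|e^{-\Phi(p,\phi_1)/2} - e^{-\Phi(p,\phi_2)/2}\bigr|^2 \rho(\mathrm{d}p), \qquad I_2 = \bigl(Z_1^{-1/2} - Z_2^{-1/2}\bigr)^2 Z_2.
\end{equation*}

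Next I would estimate $I_1$ via the mean-value-theorem bound $|e^{-a/2}-e^{-b/2}|^2 \le \tfrac14 \max(e^{-a},e^{-b})\,|a-b|^2$ together with the Lipschitz estimate~\eqref{eq:potential_Lipschitz_in_data}, which yields $I_1 \le \tfrac{L_\phi^2}{4}Z_1^{-1}(Z_1+Z_2)\,\norm{\phi_1-\phi_2}_{\mathbb{R}^M}^2$. For $I_2$, a parallel computation applied inside the normalization integral gives $|Z_1-Z_2| \le L_\phi \max(Z_1,Z_2)\,\norm{\phi_1-\phi_2}_{\mathbb{R}^M}$, which combined with the elementary inequality $|Z_1^{-1/2}-Z_2^{-1/2}|\le\tfrac12\min(Z_1,Z_2)^{-3/2}|Z_1-Z_2|$ produces the corresponding quadratic bound on $I_2$.

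To finish, I would exploit that the compactness of $\mathcal{J} = {[s_-,s_+]}^3$ together with the boundedness of $\mathcal{G}_\mathbf{x}$ from Lemma~\ref{lemma:wellposed_forward} makes $\Phi(\cdot,\phi_i)$ bounded on $\mathcal{J}$, so that both $Z_1$ and $Z_2$ are bounded from above and, crucially, from below by positive constants; aggregating these bounds with the estimates on $I_1$ and $I_2$ collapses to~\eqref{eq:well_posed_hellinger_norm} after using norm equivalence on $\mathbb{R}^M$ to convert $\norm{\phi_1-\phi_2}_{\mathbb{R}^M}$ to $\norm{\phi_1-\phi_2}_\Xi$.

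The main obstacle I anticipate is careful constant tracking: since $\Phi(p,\phi) = -\langle\phi,\mathcal{G}_\mathbf{x}(p)\rangle_\Xi + \tfrac12\norm{\mathcal{G}_\mathbf{x}(p)}_\Xi^2$ grows linearly in $\phi$, the normalization constants $Z_i$ are not uniformly bounded over all of $\mathbb{R}^M$, so obtaining a genuinely uniform $L$ plausibly requires either restricting to a bounded neighborhood of the data or absorbing a dependence $L = L(\norm{\phi_1}_\Xi,\norm{\phi_2}_\Xi)$ into the final constant, as is customary in the Stuart framework.
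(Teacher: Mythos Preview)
Your proposal is correct and follows essentially the same route as the paper: the same add--subtract splitting of the Hellinger integrand via $(a+b)^2\le 2a^2+2b^2$, the same control of the first term by Lipschitz continuity of $\Phi$ in the data via~\eqref{eq:potential_Lipschitz_in_data}, and the same control of the second term by Lipschitz continuity of $Z$ (which the paper isolates as Lemma~\ref{lemma:Z_Lipschitz} rather than deriving inline). Your closing remark about the constant $L$ depending on $\norm{\phi_1}_\Xi,\norm{\phi_2}_\Xi$ is well taken and applies equally to the paper's final constant $L_\rho^2 = L_\text{e}^2 L_\phi^2 Z_1^{-1} + L_\text{sqrt}^2 L_\text{Z}^2 Z_2$.
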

To prove the above theorem, we need the following lemma:
\begin{lemma}\label{lemma:Z_Lipschitz}
The scaling factor \(Z(\phi) = \int_{\mathcal{J}} \exp(-\Phi(p,\phi))\,\mathrm{d}\rho(p)\) is Lipschitz continuous in \(\phi \), i.e., there exists a constant \(L_\text{Z} > 0\) such that
  \begin{align}
     |Z(\phi_1) - Z(\phi_2)| \le L_\text{Z} \norm{\phi_1 - \phi_2}_\Xi\label{eq:Z_Lipschitz}
  \end{align}
holds for all \(\phi_1,\phi_2 \in \mathbb{R}^{M}\) with \( \phi_1 \ne \phi_2 \).
\end{lemma}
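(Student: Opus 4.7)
The plan is to pull the difference \(Z(\phi_1) - Z(\phi_2)\) inside the integral and bound the pointwise (in \(p\)) difference of the two integrands via the Lipschitz continuity of \(\Phi\) in its second argument already established in~\eqref{eq:potential_Lipschitz_in_data}.

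Concretely, I would first apply the triangle inequality for integrals together with the elementary mean-value estimate \(|e^{-a} - e^{-b}| \le \max(e^{-a},e^{-b})\,|a-b|\) to obtain
\[
|Z(\phi_1) - Z(\phi_2)| \le \int_{\mathcal{J}} \max_{i=1,2} \exp\bigl(-\Phi(p,\phi_i)\bigr)\,\bigl|\Phi(p,\phi_1) - \Phi(p,\phi_2)\bigr|\,\mathrm{d}\rho(p).
\]
Next I would bound the exponential factor uniformly in \(p\): expanding \(-\Phi(p,\phi) = \langle \phi,\mathcal{G}_\mathbf{x}(p)\rangle_\Xi - \tfrac12 \|\mathcal{G}_\mathbf{x}(p)\|_\Xi^2\) and invoking Cauchy--Schwarz together with the boundedness~\eqref{eq:G_bounded} of \(\mathcal{G}_\mathbf{x}\) gives \(-\Phi(p,\phi_i) \le K\) for some constant \(K\) depending on \(\phi_1,\phi_2,C,\Xi\) but not on \(p\). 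Plugging in~\eqref{eq:potential_Lipschitz_in_data} and using that \(\rho\) is a probability measure collapses the integral to \(e^K L_\phi \|\phi_1 - \phi_2\|_{\mathbb{R}^M}\); since \(\Xi = \xi I\) is a multiple of the identity, the norms \(\|\cdot\|_{\mathbb{R}^M}\) and \(\|\cdot\|_\Xi\) are equivalent and the desired bound with \(L_\text{Z} \coloneqq e^K L_\phi \sqrt{\xi}\) follows.

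The main obstacle is that \(\exp(-\Phi(p,\phi))\) is \emph{not} uniformly bounded in \(\phi \in \mathbb{R}^M\): it grows like \(\exp(C\|\phi\|_\Xi)\). Consequently, \(L_\text{Z}\) inherits a dependence on \(\max(\|\phi_1\|_\Xi,\|\phi_2\|_\Xi)\), so the lemma is genuinely a \emph{local} Lipschitz statement. This is the standard convention in the Bayesian inverse problem literature (cf.~\cite{R2}) and is entirely sufficient for the Hellinger-metric estimate in Theorem~\ref{theo:wellposed_Bayes}, where the two data points are fixed.
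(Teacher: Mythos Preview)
Your argument is correct and is precisely the route the paper has in mind: the authors do not spell out a proof but merely state that the lemma ``follows from Lemma~\ref{lemma:wellposed_forward} with basic calculations and estimations,'' and your chain of inequalities---triangle inequality for the integral, mean-value bound for the exponential, the data-Lipschitz estimate~\eqref{eq:potential_Lipschitz_in_data}, and \(\rho(\mathcal{J})=1\)---is exactly that. Your observation that \(L_\text{Z}\) necessarily depends on \(\max_i \|\phi_i\|_\Xi\) is also on target; the paper tacitly adopts the same local viewpoint when, in the proof of Theorem~\ref{theo:wellposed_Bayes}, it invokes ``the Lipschitz continuity of the exponential function \ldots\ on bounded domains.''
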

The statement follows from Lemma~\ref{lemma:wellposed_forward} with basic calculations and estimations and is thus left to the reader.

\begin{proof}[Proof of Theorem~\ref{theo:wellposed_Bayes}]
Let \(\rho^\text{EMG}_1,\rho^\text{EMG}_2\) denote the solutions of the Bayesian inverse EMG problem for given measurements \(\phi_1 \ne \phi_2\). 
For simplicity, we write \(\Phi_j \coloneqq \Phi(p,\phi_j)\) and \(Z_j \coloneqq Z(\phi_j)\), \(j=1,2\). 
We estimate the Hellinger distance between the two posterior distributions using Young's inequality~(YI), the Lipschitz continuity of the exponential function and the inverse of the square root on bounded domains with constants \(L_\text{e}\) and \(L_\text{sqrt} \) and Lemma~\ref{lemma:Z_Lipschitz}:
\begin{align*}
   2 d_\text{Hell}{(\rho^\text{EMG}_1,\rho^\text{EMG}_2)}^2 &= \; \int_{\mathcal{J}} {\left[ {\Big( \frac{1}{Z_1}\exp(-\Phi_1) \Big)}^\frac12 - {\Big( \frac{1}{Z_2}\exp(-\Phi_2) \Big)}^\frac12 \right]}^2 \, \mathrm{d}\rho(p)\\
   &\underset{\mathclap{\exp,{({\,})}^{-\frac{1}{2}}\text{Lip.}}}{\overset{\mathclap{\text{(YI)}}}{\le}} \; 2\int_{\mathcal{J}} \frac{1}{Z_1} L_\text{e}^2 \left| \Phi_1 - \Phi_2 \right|^2\,\mathrm{d}\rho(p) \\
   &\;\quad+ 2\int_{\mathcal{J}} L_\text{sqrt}^2 \left| Z_1 - Z_2 \right|^2 \exp(-\Phi_2)\,\mathrm{d}\rho(p)\\
   &\overset{\mathclap{\eqref{eq:Z_Lipschitz}}}{\le} \; 2\int_{\mathcal{J}} \frac{1}{Z_1} L_\text{e}^2 L_\phi^2 \norm{\phi_1 - \phi_2}_\Xi^2\,\mathrm{d}\rho(p) \\
   &\;\quad+ 2\int_{\mathcal{J}} L_\text{sqrt}^2 L_\text{Z}^2 \norm{\phi_1 - \phi_2}_\Xi^2 \exp(-\Phi_2)\,\mathrm{d}\rho(p)\\
   &= \; 2\left( L_\text{e}^2 L_\phi^2 \frac{1}{Z_1} + L_\text{sqrt}^2 L_\text{Z}^2 Z_2 \right) \norm{\phi_1 - \phi_2}_\Xi^2 .
\end{align*}
As \(Z_1 > 0\) holds, it follows that \(\frac{1}{Z_1} < \infty \). 
It thus remains to prove that \(Z_2 < \infty \) which is a consequence of \(\mathcal{G} \) being bounded and \(\rho(\mathcal{J}) = 1\). 
The assertion follows with Lipschitz constant \(L_\rho^2 \coloneqq L_\text{e}^2 L_\phi^2 \frac{1}{Z_1} + L_\text{sqrt}^2 L_\text{Z}^2 Z_2\).
\end{proof}

\begin{remark}
The estimate in~\eqref{eq:well_posed_hellinger_norm} also describes the behavior of the posterior with respect to the discretization of the underlying equations.
\end{remark}

Recapitulating Theorems~\ref{theo:applicability_Bayes} and~\ref{theo:wellposed_Bayes} shows that modeling the measurement error as a stochastic quantity leads to a regularization of our inverse EMG problem, see also~\cite{R2}.

\section{Discretization and tensorization}\label{sec:guiding_example}

As described in Section~\ref{sec:EMG_model}, we compute the posterior distribution \(\rho^\text{EMG}\) using a Metropolis-Hastings algorithm.
We obtain an approximation of the posterior by drawing a finite number of samples. 
Additionally, we discretize the forward operator \( \mathcal{G}_\mathbf{x} \) as follows.
In accordance with Section~\ref{sec:Bayes_inverse}, we show a discretization for the more general case of space-dependent intracellular conductivities and mention that this discretization simplifies slightly for the space-independent case.

With \(x = {(x_1,x_2,x_3)}^\top \in D_\text{M}\) the left-hand side of equation~\eqref{eq:main_eq} reads
\begin{equation}\label{eq:cookie3D} 
	\begin{aligned}
	A \phi_\text{e}  \coloneqq \nabla \cdot \left( ( \sigma_\text{i}(x) + \sigma_\text{e} ) \nabla \phi_\text{e}(x) \right)  =  \sum_{j=1}^3 \frac{\partial}{\partial x_j} \left( ( \sigma_\text{i}(x) + \sigma_\text{e} ) \frac{\partial}{\partial x_j} \phi_\text{e}(x) \right)
	\end{aligned}
\end{equation}
and the right-hand side is given by
\begin{equation}\label{eq:cookie3DRHS} 
	\begin{aligned}
	b  \coloneqq  - \nabla \cdot \left( \sigma_\text{i}(x)  \nabla V_\text{m}(x) \right)  =  \sum_{j=1}^3 -\frac{\partial}{\partial x_j} \left(  \sigma_\text{i}(x)  \frac{\partial}{\partial x_j} V_\text{m}(x) \right).
	\end{aligned}
\end{equation}
Since our forward solver uses a finite difference discretization, we consider the same discretization using centered differences of second order, and therefore assume that \(\phi_\text{e} \in C^4(D_\text{M})\) and \(\sigma_\text{i} \in C^1(D_\text{M})\). 
This is reasonable under our assumptions. 
Our theoretical and numerical results directly generalize to, e.g., finite element discretizations of arbitrary but given muscle geometries. 
The practical realization is future work.

In the following we use \(h = (h_\text{M},h_\text{t},h_\sigma)\) to indicate the discretization of the muscle geometry by \(h_\text{M}\), the time by \(h_\text{t}\) and the parameter space by \(h_\sigma \).
We denote the grid points by \((x_{j_1}\), \(x_{j_2}\), \(x_{j_3})\), \(j_k = 0,\dots, n \), for \(n \in \N \) and a discrete conductivity at grid point \((x_{{j_1}}\), \(x_{{j_2}}\), \(x_{{j_3}})\) by \(\sigma_{j_1, j_2, j_3}\).
\begin{theorem}\label{th:cookie3DStencil}
For
\begin{equation}\label{eq:cookie3dOhneSigmaE}
\begin{aligned} 
B \phi  \coloneqq \nabla \cdot \left( \sigma(x)  \nabla \phi(x) \right) =  \sum_{j=1}^3 \frac{\partial}{\partial x_j} \left(  \sigma(x)  \frac{\partial}{\partial x_j} \phi(x) \right)
\end{aligned}
\end{equation}
a second-order consistent stencil is given by 
\begin{align*}
&
\begin{bmatrix}
0 & 0 & 0 \\
0 &  \frac{ \sigma_{j,j,j-1} + \sigma_{j,j,j} }{2h_\text{M}^2} & 0 \\
0 & 0 & 0
\end{bmatrix} \text{ in the first plane, in the second plane by} \\
& 
\begin{bmatrix}
0 & \frac{\sigma_{j,j-1,j} + \sigma_{j,j,j}}{2h_\text{M}^2} & 0 \\
 \frac{\sigma_{j-1,j,j} + \sigma_{j,j,j}}{2h_\text{M}^2} & -\frac{ \sigma_{j-1,j,j} + \sigma_{j,j-1,j} +\sigma_{j,j,j-1} + 6 \sigma_{j,j,j} + \sigma_{j,j,j+1} + \sigma_{j,j+1,j} + \sigma_{j+1,j,j} }{2h_\text{M}^2} & \frac{ \sigma_{j,j,j} + \sigma_{j+1,j,j} }{2h_\text{M}^2} \\
0 &  \frac{ \sigma_{j,j,j} + \sigma_{j,j+1,j} }{2h_\text{M}^2} & 0
\end{bmatrix}\\
& \text{and }  
\begin{bmatrix}
0 & 0 & 0 \\
0 &  \frac{ \sigma_{j,j,j} + \sigma_{j,j,j+1} }{2h_\text{M}^2} & 0 \\
0 & 0 & 0
\end{bmatrix} \text{ in the third plane.}
\end{align*}
\end{theorem}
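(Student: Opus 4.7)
The plan is to exploit linearity and decompose $B$ into three one\hyp{}dimensional operators $B_\ell \phi := \partial_{x_\ell}(\sigma\,\partial_{x_\ell}\phi)$, $\ell=1,2,3$, so that it suffices to derive a second\hyp{}order consistent stencil for each $B_\ell$ and sum the contributions. For each direction I would use the standard conservative half\hyp{}point discretization: approximate the flux $F_\ell := \sigma\,\partial_{x_\ell}\phi$ at the staggered points obtained by shifting $x$ by $h_\text{M}/2$ in the $\pm x_\ell$ direction, combining a centered difference of $\phi$ with the arithmetic mean of $\sigma$ across that edge, and then take a centered difference of these two half\hyp{}point flux approximations.

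Concretely, for $B_1$ at the node indexed $(j,j,j)$, set
\[ F^h_{j+1/2} := \frac{\sigma_{j+1,j,j}+\sigma_{j,j,j}}{2}\cdot\frac{\phi_{j+1,j,j}-\phi_{j,j,j}}{h_\text{M}}, \qquad F^h_{j-1/2} := \frac{\sigma_{j,j,j}+\sigma_{j-1,j,j}}{2}\cdot\frac{\phi_{j,j,j}-\phi_{j-1,j,j}}{h_\text{M}}, \]
and approximate $B_1\phi$ by $(F^h_{j+1/2}-F^h_{j-1/2})/h_\text{M}$. Expanding yields the coefficients $(\sigma_{j\pm 1,j,j}+\sigma_{j,j,j})/(2h_\text{M}^2)$ at $\phi_{j\pm 1,j,j}$ together with the partial central contribution $-(\sigma_{j-1,j,j}+2\sigma_{j,j,j}+\sigma_{j+1,j,j})/(2h_\text{M}^2)$. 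The analogous constructions for $B_2$ and $B_3$ populate the remaining in\hyp{}plane off\hyp{}diagonal entries of the second plane (from $B_2$) and the single off\hyp{}diagonal entry of the first and third planes (from $B_3$); summing the three central contributions gives exactly the claimed center entry, in which each outer $\sigma$\hyp{}value appears once and $\sigma_{j,j,j}$ carries the factor $6$.

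For the consistency order I would Taylor\hyp{}expand at the half\hyp{}point $x_{j+1/2}$ (under the regularity implicit in the statement, namely $\phi\in C^4$ and $\sigma$ slightly smoother than the stated $C^1$). The average obeys $\tfrac12(\sigma_j+\sigma_{j+1}) = \sigma(x_{j+1/2}) + \tfrac{h_\text{M}^2}{8}\partial_{x_1}^2\sigma(x_{j+1/2}) + O(h_\text{M}^4)$, and the inner difference obeys $(\phi_{j+1}-\phi_j)/h_\text{M} = \partial_{x_1}\phi(x_{j+1/2}) + \tfrac{h_\text{M}^2}{24}\partial_{x_1}^3\phi(x_{j+1/2}) + O(h_\text{M}^4)$. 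Multiplying the two expansions yields $F^h_{j+1/2} = F_1(x_{j+1/2}) + h_\text{M}^2 G(x_{j+1/2}) + O(h_\text{M}^4)$ with a smooth remainder density $G$, and the outer centered difference then gives $(F^h_{j+1/2}-F^h_{j-1/2})/h_\text{M} = \partial_{x_1}F_1(x_j) + h_\text{M}^2\,\partial_{x_1}G(x_j) + O(h_\text{M}^2) = B_1\phi(x_j)+O(h_\text{M}^2)$. Summing the three coordinate directions preserves the order.

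The main obstacle is the subtlety in this last step: a purely pointwise bound $F^h_{j\pm 1/2} = F_1(x_{j\pm 1/2}) + O(h_\text{M}^2)$ would give only first\hyp{}order consistency of the full stencil, because dividing the difference of two such errors by $h_\text{M}$ costs one order. The key is that the leading flux error is a \emph{smooth function of the half\hyp{}point}, so the outer centered difference telescopes it down by one further power of $h_\text{M}$. Once this cancellation is tracked carefully, the remainder of the proof is routine algebraic collection of stencil entries along the three coordinate axes.
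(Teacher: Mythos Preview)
Your proposal is correct and shares the paper's high\hyp{}level strategy: reduce to the one\hyp{}dimensional case via the additive (Kronecker) structure of $B=\sum_\ell B_\ell$, establish the 1D stencil, and then sum along the three coordinate axes. The difference lies in how the 1D case is handled. The paper starts from an \emph{ansatz} $(B_h\phi_h)_j = h_\text{M}^{-2}\bigl(-\tilde\sigma_j\phi_{j-1}+(\tilde\sigma_j+\tilde\sigma_{j+1})\phi_j-\tilde\sigma_{j+1}\phi_{j+1}\bigr)$ with undetermined coefficients $\tilde\sigma_j$, Taylor\hyp{}expands both this and the continuous expression $(B\phi)_j=\sigma'(x_j)\phi'_j+\sigma(x_j)\phi''_j$ around $x_j$, and solves for $\tilde\sigma_j=\tfrac12(\sigma_{j-1}+\sigma_j)$ by matching coefficients. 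You instead construct the scheme directly in flux form and expand at the half\hyp{}points to verify second order. Your route makes the origin of the arithmetic mean transparent and isolates the cancellation mechanism that recovers the second order after the outer division by $h_\text{M}$; the paper's route is shorter and sidesteps that bookkeeping by working entirely at the node $x_j$. Your remark that the argument implicitly uses a bit more than $\sigma\in C^1$ is a fair observation about both presentations.
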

\begin{proof}
Because of the Kronecker product structure of~\eqref{eq:cookie3dOhneSigmaE} the statement follows from the one-dimensional case.
There, Taylor's theorem and equating the coefficients of
\begin{align*}
{(B \phi )}_j & = \left( \sigma'(x_j) \phi'_j + \sigma(x_j) \phi''_j \right) \quad \text{ and }\\
{(B_h \phi_h)}_j & = \frac{1}{h_\text{M}^2} \left( -\tilde{\sigma}_{j} \phi_{j-1} + (\tilde{\sigma}_{j} + \tilde{\sigma}_{j+1}) \phi_{j} - \tilde{\sigma}_{j+1} \phi_{j+1} \right),
\end{align*}
yields \(\tilde{\sigma}_j = \frac{\sigma_{j-1}+\sigma_{j}}{2}\) for a second-order consistent stencil given by  
\begin{equation*}
	\frac{1}{h_\text{M}^2}
		\begin{bmatrix}
			 \frac{\sigma_{j-1} + \sigma_{j}}{2} & - \frac{ \sigma_{j-1} + 2 \sigma_{j} + \sigma_{j+1} }{2} & \frac{ 	\sigma_{j} + \sigma_{j+1} }{2}
		\end{bmatrix},
\end{equation*}
immediately finishing the proof.
\end{proof}
Next, we derive an affine representation of the discrete operator and prove a low-rank tensor format representation of the operator and the right-hand side of the forward EMG problem.
This is our second main contribution. 
\begin{corollary}\label{corol:aff3D}
An affine representation of the discrete operator in the three-dimensional case is given by
\begin{multline*}
\frac{\sigma_{j,j-1,j}}{h_\text{M}^2} M_{j,j-1,j} + \frac{\sigma_{j,j,j+1}}{h_\text{M}^2} M_{j,j,j+1}	 + \frac{\sigma_{j-1,j,j}}{h_\text{M}^2} M_{j-1,j,j} \\
 +  \frac{\sigma_{j,j,j}}{h_\text{M}^2} M_{j,j,j}  + \frac{\sigma_{j+1,j,j}}{h_\text{M}^2} M_{j+1,j,j} + \frac{\sigma_{j,j,j-1}}{h_\text{M}^2} M_{j,j,j-1} + \frac{\sigma_{j,j+1,j}}{h_\text{M}^2} M_{j,j+1,j} ,
\end{multline*}
where in the first plane the stencil is given by
\begin{align*}
M_{j,j-1,j}^{(:,:,1)} & =
M_{j,j,j+1}^{(:,:,1)} =
M_{j-1,j,j}^{(:,:,1)} = 
M_{j,j,j}^{(:,:,1)} = 
		M_{j+1,j,j}^{(:,:,1)} = 
		\begin{bmatrix}
			0 & 0 & 0 \\
			0 & 0 & 0 \\
			0 &  0 & 0
		\end{bmatrix}, \\
	M_{j,j,j-1}^{(:,:,1)} & =
	\begin{bmatrix}
	0 & 0 & 0 \\
	0 & \frac{1 }{2} &0 \\
	0 &  0 & 0
	\end{bmatrix},
 M_{j,j-1,j}^{(:,:,1)} =
	\begin{bmatrix}
	0 & 0 & 0 \\
	0 & 0 & 0 \\
	0 &  0 & 0
	\end{bmatrix},
\end{align*}
in the second plane by
\begin{align*}
M_{j,j-1,j}^{(:,:,2)} & =
		\begin{bmatrix}
		0 &  \frac{1}{2} & 0 \\
		0 & -\frac{ 1 }{2} & 0 \\
		0 & 0 & 0
		\end{bmatrix},
M_{j,j,j+1}^{(:,:,2)} =
		\begin{bmatrix}
		0 & 0 & 0 \\
		0 &  -\frac{ 1 }{2} & 0 \\
		0 & 0 & 0
		\end{bmatrix},\\
	 M_{j-1,j,j}^{(:,:,2)} & = 
		\begin{bmatrix}
			0 & 0 & 0 \\
			 \frac{1}{2} & -\frac{1}{2} & 0 \\
			0 &  0 & 0
		\end{bmatrix},
M_{j,j,j}^{(:,:,2)}  = 
		\begin{bmatrix}
			0 &  \frac{1}{2} & 0 \\
			\frac{1}{2} & -3 & \frac{ 1 }{2} \\
			0 &  \frac{1 }{2} & 0
		\end{bmatrix}, \\ 
		M_{j+1,j,j}^{(:,:,2)} & = 
		\begin{bmatrix}
			0 & 0 & 0 \\
			0 & -\frac{1}{2} & \frac{ 1 }{2} \\
			0 &  0 & 0
		\end{bmatrix}, 
	M_{j,j,j-1}^{(:,:,2)}  =
	\begin{bmatrix}
	0 & 0 & 0 \\
	0 &  -\frac{1 }{2} &0 \\
	0 &  0 & 0
	\end{bmatrix},
	 M_{j,j-1,j}^{(:,:,2)} =
	\begin{bmatrix}
	0 & 0 & 0 \\
	0 & -\frac{1 }{2} & 0 \\
	0 &  \frac{ 1 }{2} & 0
	\end{bmatrix},
\end{align*}
and in the third plane by
\begin{align*}
	 M_{j,j-1,j}^{(:,:,3)} &=
		\begin{bmatrix}
		0 & 0 & 0 \\
		0 & 0 & 0 \\
		0 & 0 & 0
		\end{bmatrix},
	M_{j,j,j+1}^{(:,:,3)} =
		\begin{bmatrix}
		0 & 0 & 0 \\
		0 &  \frac{ 1 }{2} & 0 \\
		0 & 0 & 0
		\end{bmatrix},\\
	 M_{j-1,j,j}^{(:,:,3)} &= 
		M_{j,j,j}^{(:,:,3)} = 
		M_{j+1,j,j}^{(:,:,3)} = 
	M_{j,j,j-1}^{(:,:,3)} =
	 M_{j,j-1,j}^{(:,:,3)} =
	\begin{bmatrix}
	0 & 0 & 0 \\
	0 & 0 & 0 \\
	0 &  0 & 0
	\end{bmatrix}	.
\end{align*}
\end{corollary}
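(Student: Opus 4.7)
The plan is straightforward: take the second-order consistent stencil from Theorem~\ref{th:cookie3DStencil}, distribute every coefficient that appears as a sum of $\sigma$-values, and regroup the result as a linear combination over the seven distinct conductivity values $\sigma_{j,j,j}$, $\sigma_{j\pm 1,j,j}$, $\sigma_{j,j\pm 1,j}$, $\sigma_{j,j,j\pm 1}$ that appear in the stencil. Since every nonzero stencil entry is of the form $\frac{1}{2h_\text{M}^2}(\sigma_a+\sigma_b)$ or $-\frac{1}{2h_\text{M}^2}$ times a sum of seven such values, the affine decomposition exists in principle; what remains is careful bookkeeping to confirm that the resulting $3\times 3\times 3$ coefficient patterns agree entry-by-entry with those prescribed in the corollary.

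The first step is to fix one of the seven conductivities, say $\sigma_{j,j,j}$, and inspect every stencil entry across all three planes for its contribution. The center of plane one contributes $\tfrac{1}{2}$ at the $(2,2,1)$ position, the center of plane three contributes $\tfrac{1}{2}$ at $(2,2,3)$, each of the four edge entries of plane two contributes $\tfrac{1}{2}$, and the center of plane two contributes $-3$ (since $\sigma_{j,j,j}$ occurs with a factor of $6$ inside the negative sum, halved by the $\tfrac{1}{2h_\text{M}^2}$). This reproduces exactly the matrix $M_{j,j,j}$ stated in the corollary.

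The second step is to repeat this bookkeeping for each of the remaining six conductivities. Each such $\sigma$-value appears in exactly two stencil positions: once with a positive $\tfrac{1}{2}$ coefficient in an off-center entry (located in the plane and 2D-direction determined by which neighboring grid point its indices specify), and once with a negative $-\tfrac{1}{2}$ coefficient inside the central sum of plane two. Reading off the plane and the 2D-offset coordinates for these two positions recovers precisely the sparsity patterns of $M_{j\pm 1,j,j}$, $M_{j,j\pm 1,j}$, and $M_{j,j,j\pm 1}$ as listed. Summing these seven contributions, each weighted by $\sigma_{\cdot,\cdot,\cdot}/h_\text{M}^2$, reproduces the original stencil and hence the affine representation.

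The main obstacle is purely bookkeeping, not analytic content: seven $\sigma$-values must be distributed across three stencil planes without losing a factor of $\tfrac{1}{2}$ or misplacing a plane index. No further approximation or estimation is required — linearity of the stencil in the conductivities is what makes the whole argument mechanical once the distribution pattern is set up correctly.
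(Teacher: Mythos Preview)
Your approach is correct and is precisely what the paper does: its entire proof reads ``Follows from Theorem~\ref{th:cookie3DStencil} with linearity.'' You have merely spelled out the bookkeeping that this one line encodes---distributing each stencil entry of the form $\tfrac{1}{2h_\text{M}^2}(\sigma_a+\sigma_b)$ over the seven conductivity values and regrouping---so there is no substantive difference in method.
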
 
\begin{proof}
Follows from Theorem~\ref{th:cookie3DStencil} with linearity.
\end{proof}
We define \(A_h^{(0)} \coloneqq A_{h,\sigma_\text{e}}\) denoting the discrete operator given by Theorem~\ref{th:cookie3DStencil} for constant \(\sigma_\text{e} \in \mathbb{R}^{3 \times 3}\) and~\(A_{h, j_1, j_2, j_3}\) denoting the discrete operator given by the stencil \(M_{j_1, j_2, j_3}\) from Corollary~\ref{corol:aff3D}.
Then the discrete operator of~\eqref{eq:cookie3D} is given by
\begin{equation*}
	A_h \coloneqq A_{h,\sigma_\text{e}} + \sum_{j_1=1}^{m_1} \sum_{j_2=1}^{m_2} \sum_{j_3=1}^{m_3} \sigma_{j_1,j_2,j_3} A_{h,j_1,j_2,j_3}.
\end{equation*}
Using the vectorizations \(\operatorname{vec}(A_{h, j_1, j_2, j_3}) \eqqcolon A_h^{(k)}\) and \(\operatorname{vec}(\sigma_{j_1, j_2, j_3}) \eqqcolon p^{(k)}\), see Definition~\ref{def:matricization}, yields a parameter\hyp{}dependent affine structure of the form
\begin{equation*}
	A_h(p) \coloneqq A_h^{(0)} + \sum_{k=1}^d p^{(k)} A_h^{(k)}
\end{equation*}
with \(p \coloneqq (p^{(1)}, \dots, p^{(d)} )\), where each \(A_h^{(k)}\) is constant, i.e., \(A_h^{(k)}\) is parameter\hyp{}independent.
We now take a closer look at the right-hand side and discretize the time variable \(t\) in~\eqref{eq:Rosenfalck} using equidistant time steps \(t_j = jh_\text{t}\), \(j = 0,\ldots, t_{\max}\) for time step size \(h_\text{t}\). 
Multiplying this with the AP velocities \(u_k\), \(k = 1,\ldots,N_\text{MF}\), we achieve \(s_j = u_k t_j\) for the discretization of the muscle fiber coordinate \(s\).

Furthermore, we remark that the linear dependency of the right-hand side on the intracellular conductivity is obvious under our assumptions, which include that the muscle fiber direction is one of the standard unit vectors, i.e., \(\vec{d} = \vec{e}_j\), \(j = 1,2\), or \(3\).
If \(V_\text{m}\) is independent of \(\sigma_\text{i}\), the structure of the right-hand side is the same as the structure of the operator.
Then we see the linear structure of~\eqref{eq:cookie3DRHS} that has the form
\begin{equation*}
b_h(p) \coloneqq \sum_{k = 1}^d p^{(k)} b_h^{(k)}.
\end{equation*}
How to represent an arbitrary right-hand side in a parameter-dependent way is ongoing research.

We now discretize the parameter space by choosing a finite number of parameters~\(p_h \coloneqq (p_h^{(1)}, \dots, p_h^{(\ell)}, \dots, p_h^{(d)})\) from a discrete set~\(\mathcal{J}_h \).
We fix discrete values for all \(p_h^{(\ell)}\), i.e., \(p_h^{(\ell)} \in \lbrace p_h^{(\ell)}(1), p_h^{(\ell)}(2), \dots, p_h^{(\ell)}(n_{\ell}) \rbrace \), and reformulate our problem as:
\begin{equation}\label{eq:problem}
\text{Solve } A_h(p_h) \phi_h(p_h, t) =  b_h(p_h, t) \text{ for all } p_h \in \mathcal{J}_h .
\end{equation}
Assuming that each parameter \(p_h^{(\ell)} \) can take \( n_\ell \) different values, applying classical methods one has to solve a system of \(\prod_{\ell=1}^d n_{\ell} \approx n^d\) linear equations. 
To overcome the curse of dimensionality in this case, we exploit the structure of the linear system, see Section~\ref{sec:repParameterProblems}.
We find a data-sparse representation of the problem that allows us to solve the parameter-dependent system for all \(p_h \in \mathcal{J}_h \) simultaneously, analogously to~\cite{Grasedyck2020}.
For computing the solution of~\eqref{eq:problem} for all possible \(p_h \in \mathcal{J}_h \), we define a large block-diagonal system with the operator
\begin{align*}
	\mathbf{A} & \coloneqq
	\begin{pmatrix}
		A_{1} ^{(0)}& 0 & \hdots & 0 \\
		0 & 	A_{2} ^{(0)} & \ddots & \vdots \\
		\vdots & \ddots & \ddots & 0 \\
		0 & \hdots  & 0 & 	A_{n} ^{(0)}
	\end{pmatrix} 
	\eqqcolon \operatorname{blkdiag}\left({A}_{1} ^{(0)}, \dots, {A}_{n} ^{(0)} \right),
\end{align*}
where \(	{A}_{j} ^{(0)} =  A_h^{(0)}+ \sum_{\ell = 1}^{d} p_h^{(\ell)}(j) A_h^{(\ell)} \) denotes the \(j-\)th diagonal block.

The memory requirement to store \(\mathbf{A}\), however, grows exponentially in \(n\) and thus, even for moderate values of \(d\) and \(n_{\ell}\), a classical representation of our problem is infeasible. 
Therefore, we reformulate the problem using the notation \({A}_{j}^{(m)} =  \sum_{\ell = m}^{d} p_h^{(\ell)}(j) A_h^{(\ell)} \), \(m = 1,\dots, d \), and \(\Id_{n_k} \) denoting the identity in \( \mathbb{R}^{n_k \times n_k}\), and achieve:
\begin{align*}
		 \mathbf{A} &= \operatorname{blkdiag}\left(A_h^{(0)} + {A}_{1} ^{(1)}, A_h^{(0)}+ {A}_{2} ^{(1)}, \dots, A_h^{(0)}+ {A}_{n} ^{(1)} \right) \\
		&= \operatorname{blkdiag}\left( A_h^{(0)}, A_h^{(0)}, \dots, A_h^{(0)} \right)\\
		& \quad + \operatorname{blkdiag}\left( p_h^{(1)}(1) A_h^{(1)} , p_h^{(1)}(2) A_h^{(1)} , \dots ,  p_h^{(1)}(n_1) A_h^{(1)} \right)\\
		& \quad + \operatorname{blkdiag}\left( {A}_{1}^{(2)} , {A}_{2}^{(2)} , \dots, {A}_{n}^{(2)} \right)\\
		& =  \Id_{n_{d}} \otimes \dots \otimes \Id_{n_{2}} \otimes \Id_{n_{1}} \otimes A^{(0)} \\
		& \quad + \Id_{n_{d}} \otimes \dots \otimes \Id_{n_{2}} \otimes \operatorname{diag}\left(p_h^{(1)} \right) \otimes A_h^{(1)} \\
		& \quad + \cdots  + \operatorname{diag}\left(p_h^{(d)} \right)  \otimes \dots \otimes \Id_{n_{2}} \otimes \Id_{n_{1}} \otimes A_h^{(d)} .
\end{align*}
This leads to the following data-sparse CP representation of the operator
\begin{equation*}
	\mathbf{A} = \sum_{k = 0}^d \bigotimes_{\ell = 0}^d A_h^{(k)}\left( \ell \right) \text{ where } A_h^{(k)}\left( \ell \right) = 
	\begin{cases} 			
	A_h^{(\ell)}& \text{if } \ell = d,\\
	\operatorname{diag}\left(p_h^{(\ell)}\right)& \text{if } \ell + k = d  \text{ and } k \neq 0, \\
	\Id_{n_{d-k}}& \text{otherwise}
	\end{cases}
\end{equation*}
with discrete parameters \(p_h^{(\ell)} = (p_h^{(\ell)}(1), \dots, p_h^{(\ell)}(n_{\ell})) \).
Similar results can be obtained for the \righthandside.

Concluding, we represent the operator and the right-hand side of~\eqref{eq:problem} exactly using low-rank tensor formats.
Further, we approximate the solution of~\eqref{eq:problem} using the hierarchical Tucker format in Algorithm~\ref{alg:pcg}.

\section{The tensorized Metropolis-Hastings algorithm}\label{sec:algos}
Having proved the theory for our Bayesian inverse EMG problem and a low-rank tensor representation of the operator and right-hand side of the discrete forward EMG problem, we now derive our final main contribution: A fast tensorized Metropolis-Hastings algorithm.
Therefore, we combine the precomputation of the forward EMG problem described in Section~\ref{sec:EMG_model} for all parameters simultaneously using the hierarchical Tucker format and Algorithm~\ref{alg:pcg} with the Metropolis-Hastings sampling, as shown in Algorithm~\ref{algo:Metropolis}.
\begin{algorithm}
\caption{tensorized Metropolis-Hastings.}\label{algo:Metropolis}
\begin{algorithmic}[1]
\REQUIRE{Starting point \(p_{h,{(1)}}\) for the Markov chain, sampling radius \(\delta \)}
\ENSURE{A Markov chain \(p_h\)}
\STATE{Precompute \(\mathcal{G}(p_h)\) for all \(p_h \in \mathcal{J}_h\) using tensor formats}
\FOR{\(j = 1,\ldots,J-1  \)}\label{algo:Metropolis:line1}
\STATE{Propose \(\tilde{p}_h \sim \mathcal{U}([p_{h,{(j)}}-\delta,p_{h,{(j)}}+\delta] \cap \mathcal{J}_h) \) independent of \(p_{h,{(j)}}\)}
\STATE{Draw \(c \sim \mathcal{U}(0,1)\)}
\IF{\(c \le a(\mathcal{G}(p_{h,{(j)}}), \mathcal{G}(\tilde{p}_h))\)}\label{algo:Metropolis:line4}
\STATE{\(p_{h,{(j+1)}} = \tilde{p}_h\)} 
\ELSE{\STATE{\(p_{h,{(j+1)}} = p_{h,{(j)}}\)}}\ENDIF{}
\ENDFOR{}
\end{algorithmic}
\end{algorithm}

To be more precise, we first choose a fixed number of samples \(J \in \mathbb{N}\) that have to be drawn during the sampling process.
We then precompute the solution of the parameter-dependent forward EMG problem on a discrete set \(\mathcal{J}_h\) in the hierarchical Tucker format using the PCG method from Algorithm~\ref{alg:pcg} and store the data-sparse solution.
Recall that storing the solution of the parameter-dependent problem for all parameters is only feasible within data-sparse formats like the hierarchical Tucker format.

Doing so enables us to evaluate the precomputed tensor solution with arithmetic cost in \( \mathcal{O}(n d r^3) \) and evaluate this solution fast instead of solving the discretized forward EMG problem in every iteration in line~\ref{algo:Metropolis:line4} of the algorithm.
Note that we draw new samples \(\tilde{p}_h\) uniformly from an interval with radius \(\delta \) around the last accepted sample intersected with the discrete set \( \mathcal{J}_h \) to account for the local behavior of the potential \(\Phi \) and to accelerate convergence.

We assume that the cost of drawing one sample from the posterior distribution equals the solution time \(T_\text{s}\) of the discretized forward EMG problem for the standard Metropolis-Hastings algorithm and the evaluation time \(T_\text{e}\) of the precomputed tensor solution for the tensorized Metropolis-Hastings algorithm.
Thus, the runtime of the standard Metropolis-Hastings algorithm is \( J T_\text{s} \), while the runtime of the tensorized algorithm is the sum of the precomputation time \(T_\text{p}\) and the evaluation times, i.e., \( T_\text{p} + J T_\text{e} \).
We notice that asymptotically the speedup \( \frac{J T_\text{s}}{ T_\text{p} + J T_\text{e}} \) is limited by \( \frac{T_\text{s}}{T_\text{e}} \) for \(J \to \infty \).

Based on our mathematical theory we expect that the Markov chains constructed by both algorithms behave similarly.
This is due to the fact that we exactly represent the operator and the right-hand side of the forward EMG problem for all discrete parameter combinations within the hierarchical Tucker format.
Additionally, we compute the tensor solution using Algorithm~\ref{alg:pcg} with specified truncation accuracy, resulting in an error-controlled approximation.

\section{Numerical experiments}\label{sec:NumExp}
We illustrate our method for the inverse EMG problem with numerical experiments.
We conduct all experiments in \textsc{Matlab} using the KerMor~framework\footnote{\url{https://www.morepas.org/software/kermor/index.html}} and the \texttt{htucker}~toolbox~\cite{Kressner2014}.
Throughout our experiments we use the following default settings.

The geometry that we use is a muscle cuboid of size \SI{4 x 2 x 1}{\centi\meter} that is equipped with \num{30 x 30} muscle fibers. 
The muscle geometry is discretized using the grid size \(h_\text{M} = \frac{1}{3}\) while the muscle fibers are discretized using \(30\) grid points, and we use \(101\) time steps.
We fix the extracellular conductivity at \(\sigma_\text{e} = \operatorname{diag}(6.7, 6.7, 6.7)\).
As reference conductivity we choose \(p^\text{ref} = (\num{0.893}, \num{8.930}, \num{0.893})\), i.e., the muscle fiber direction is the second unit vector and the muscle fibers are aligned parallel to the second coordinate axis.
We allow the muscle fiber direction to be one of the three unit vectors.
As upper bound on the conductivity we define \(s_{+} = \num{10}\) and \(s_{-} = \num{0.001}\) as lower bound which we also set as the discretization step size in the parameter space, i.e., \(h_\sigma = s_{-}\).

For computing the tensor solution of~\eqref{eq:problem}, we use Algorithm~\ref{alg:pcg}.
There we set \(k_{\max} = 15\), \(\varepsilon = \num{1e-4}\) and we truncate to a relative accuracy of \num{1e-6}. 
As preconditioner we define \(\mathbf{M} \coloneqq \Id_{n_d} \otimes \dots \otimes \Id_{n_1} \otimes A_h^{(0)}\), since we observed similar convergence behavior and similar runtimes of the algorithm independent of the chosen low-rank tensor preconditioner, see, e.g.,~\cite{Kressner2011}, in former experiments.
We compute the tensor solution on a suitable conductivity grid with grid size \(h_\sigma \) and \(A_h^{(0)}\) using the conductivity at the midpoint of that grid.
For handling the time-dependency in the right-hand side, we solve the corresponding linear system for all time steps simultaneously.
This leads to a tensor of size \num{364 x 101 x 4000 x 4001 x 4000}.

For sampling from the posterior distribution of intracellular conductivity given EMG measurements, we use Algorithm~\ref{algo:Metropolis}. 
There we set the total number of samples to \num{500000} and use Gaussian noise with \(\xi = 2.0\). 
The algorithm draws a conductivity proposal in a sampling radius \( \delta = 1.5 \) around the last accepted sample.
As default we draw the initial guess from a uniform distribution on an interval with radius \(\delta \) around the reference solution, and we discard the first \num{200} samples as burn-in.
These choices proved reasonable in our parameter studies.
Additionally, we modify the algorithm such that it also samples the muscle fiber direction as one of the unit vectors.

We call Algorithm~\ref{algo:Metropolis} using the \textsc{Matlab} build-in QR decomposition to solve the forward problem for the proposed conductivity in each iteration the \emph{standard algorithm}~(SA), and we call Algorithm~\ref{algo:Metropolis} using the precomputed tensor solution the \emph{tensorized algorithm}~(TA).
\subsection*{Rank of the hierarchical Tucker format solution}
In our first numerical experiment, we examine the hierarchical Tucker rank, see Definition~\ref{def:HTRank}, of the tensor solution of the linear system to support our assumption that the solution is well approximated with low rank.
Further, a small rank is important for efficient arithmetic operations as some of these operations in low-rank tensor formats scale in \(\mathcal{O}(r^4)\), see Section~\ref{sec:repParameterProblems}.
Therefore, in Figure~\ref{fig:lowranksol} we show a logarithmic-linear plot of the relative singular values for the corresponding matricizations of the solution of the forward problem using our default setting.
\begin{figure}
\centering
\input{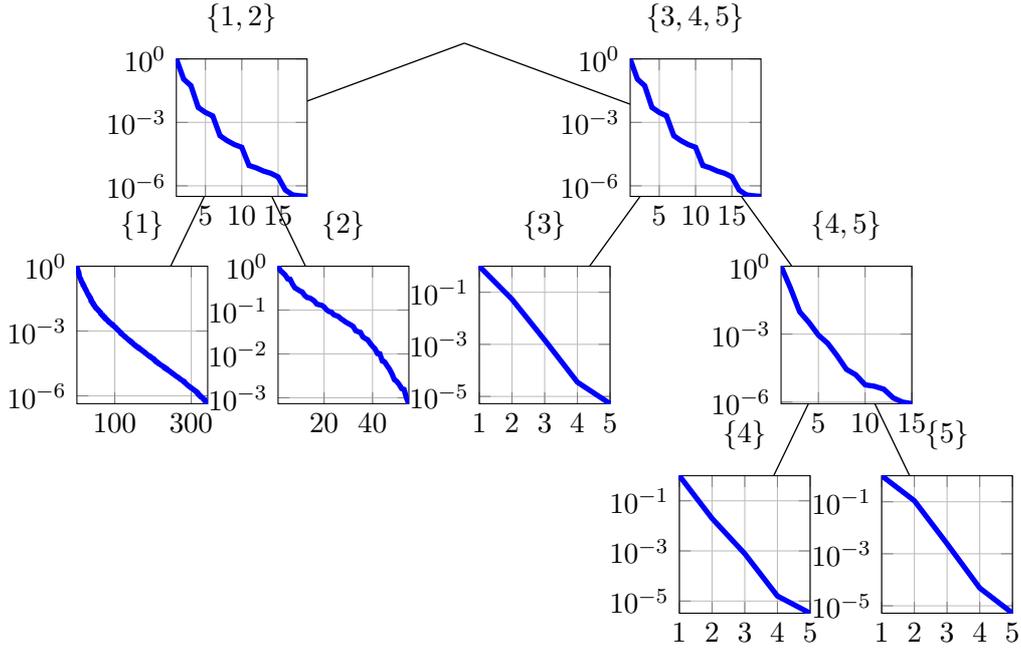} 
\caption{Relative singular values of the corresponding matricization of the low-rank solution of the forward EMG problem.}\label{fig:lowranksol}
\end{figure}
We observe that the rank of the matricization remains smaller than \num{6} in the parameter space, i.e., the rank of the matricizations corresponding to \(\lbrace 3 \rbrace \), \(\lbrace 4 \rbrace \), and \(\lbrace 5 \rbrace \).
We also see that the rank of the matricization corresponding to \( \lbrace 2 \rbrace \) is \num{55} while the rank of the matricization corresponding to \( \lbrace 1 \rbrace \) is \num{343}.
We expect that the matricization corresponding to \( \lbrace 1 \rbrace \) has full rank since this separates the spatial dimension, i.e., \( \lbrace 1 \rbrace \), and the time dimension, i.e., \( \lbrace 2 \rbrace \), and since each time step yields its own right-hand side.
Using tensor formats, we reduce the theoretical storage cost of the full tensor from \( \SI{1.88e+10}{\mega\byte} ( \approx \SI{18800000}{\giga\byte} ) \) to \SI{4.41}{\mega\byte} counting the storage cost for \(1\) entry as \SI{64}{\bit}.
\subsection*{Comparison of the tensorized algorithm and the standard algorithm}
For the validation of our tensorized algorithm, we compare its statistical behavior to the standard Metropolis-Hastings algorithm.
We run both algorithms in our default setting for the reference conductivities \(p^\text{ref}_1 = (0.893, 8.930, 0.893)\) and \(p^\text{ref}_2 = (0.893, 0.893, 8.930)\).

We present the acceptance rates~\( \frac{\#\text{samples acc.}}{\#\text{samples drawn}}\), the mean absolute deviations~(MADs) \( \frac{1}{\#\text{samples acc.}} \sum_{k=1}^{\#\text{samples acc.}} | p_{(k)} - \bar{p} | \) and variance \( \frac{1}{\#\text{samples acc.} - 1} \sum_{k=1}^{\#\text{samples acc.}} {(p_{(k)} - \bar{p})}^2 \) of the accepted diagonal entries of the conductivities in Table~\ref{tab:comparison_full_tensor}.
\begin{table}
\centering
\caption{Comparison of the standard algorithm~(SA) and the tensorized algorithm~(TA) with \num{500000} drawn samples, Gaussian noise with \(\xi = \num{2.0}\), and sampling radius \(\delta = \num{1.5}\).}\label{tab:comparison_full_tensor}
\begin{tabular}{@{}lllll@{}}
\toprule
& \multicolumn{2}{c}{\(p^\text{ref}_1\)} & \multicolumn{2}{c}{\(p^\text{ref}_2\)}\\
\cmidrule(l){2-5}
& SA & TA & SA & TA \\
\midrule
Acceptance rate~(\si{\percent}) & \num{5.39} & \num{5.38} & \num{5.79} & \num{5.79} \\
\( \operatorname{MAD}(p(1)) \) & \num{0.86} & \num{0.86} & \num{0.50} & \num{0.50} \\
\( \operatorname{MAD}(p(2)) \) & \num{0.44} & \num{0.44} & \num{0.24} & \num{0.24} \\
\( \operatorname{MAD}(p(3)) \) & \num{0.17} & \num{0.17} & \num{0.55} & \num{0.55} \\
\( \operatorname{Var}(p(1)) \) & \num{1.05} & \num{1.05} & \num{0.38} & \num{0.38} \\
\( \operatorname{Var}(p(2)) \) & \num{0.29} & \num{0.29} & \num{0.09} & \num{0.09} \\
\( \operatorname{Var}(p(3)) \) & \num{0.04} & \num{0.04} & \num{0.44} & \num{0.44} \\
\bottomrule
\end{tabular}
\end{table}
For the reference values \(p^\text{ref}_1\) and \(p^\text{ref}_2\) we observe that both methods have similar acceptance rates.
We further notice that the SA and TA have a comparable reliability, i.e., comparable MAD and variance. 

We conclude that the sampling process of both algorithms is similar and that our tensor approach is therefore a promising ansatz to accelerate the SA if the discretization error of the forward problem is small. 
In this case, we furthermore reason that our results indicate that the tensor solution of the forward EMG problem is indeed a good approximation to the solution that we obtain using the \textsc{Matlab} build-in QR decomposition.
We highlight that these results are in line with our theoretical findings from Section~\ref{sec:guiding_example}.

\subsection*{Speedup tests}
First, we examine the speedup \( \frac{\text{runtime SA}}{\text{runtime TA}}\) of our tensor method compared to the standard method for fixed discretization grid size \(h_\text{M}\) and varying number of samples.
Therefore, we run both algorithms in the default setting for \num{125} samples and double the number of samples until we reach \num{128000} samples.
We present the speedup of the TA compared to the SA in Figure~\ref{fig:runtime_fixed_grid}.

We observe that the speedup curve grows steadily and flattens as the number of samples increases.
This is due to the fact that the influence of the precomputation time of the TA, which is \( \bar{T}_\text{p} \approx \SI[round-mode=places,round-precision=2]{13.713409090909094}{\second}\) on average, decreases with growing number of samples.
As mentioned in Section~\ref{sec:algos}, the speedup is bounded by the quotient \(\frac{T_\text{s}}{T_\text{e}}\).
We insert the average time \(\bar{T}_\text{s} \approx \SI[round-mode=places,round-precision=4]{0.148134258852273}{\second}\) needed for one sample using the SA and the average time \(\bar{T}_\text{e} \approx \SI[round-mode=places,round-precision=4]{0.003741202008523}{\second}\) needed for one sample using the TA and obtain an upper bound of \num[round-mode=places,round-precision=2]{39.595364942818300} for the speedup.
For \num{128000} samples the speedup is \num[round-mode=places,round-precision=2]{36.7664517437885}, which corresponds to a runtime of \SI[round-mode=places,round-precision=2]{5.265730027778}{\hour} using the SA, compared to \( \SI[round-mode=places,round-precision=2]{0.14322105556}{\hour} ( \approx \SI[round-mode=places,round-precision=2]{8.593263333599999}{\minute} ) \) using the TA\@.

Further, we run both algorithms in the default setting for grid sizes \(h_\text{M} = \frac{1}{3}, \frac{1}{6}, \frac{1}{9}, \frac{1}{12} \).
Furthermore, to reduce the overall computation time, we reduce the number of samples to \num{100}.
We use our findings from Section~\ref{sec:algos} to extrapolate the measured sampling times to \num{100000} samples.
To be more precise, we first compute the average time for drawing one sample with both algorithms, then scale this number by \num{100000} to achieve estimates on \( T_\text{e} \) for the TA and \(T_\text{s} \) for the SA and add the measured precomputation time \( T_\text{p} \) for the TA\@.
Note that the precomputation time is independent of the number of samples.
Figure~\ref{fig:runtime_fixed_samples} shows the speedup resulting from this extrapolation.

\begin{figure}
\centering
\begin{minipage}{0.45\textwidth}
\centering
\begin{tikzpicture}
\begin{semilogxaxis}[grid=major, width=\linewidth,
xlabel={Number of samples}, ylabel={Speedup}]
\addplot table [x=number_of_samples,y=speed_up] {img/data_experiment_3_new.dat};
\end{semilogxaxis}
\end{tikzpicture}
\caption{Speedup of the tensorized algorithm compared to the standard algorithm for fixed grid size and a varying number of samples.}\label{fig:runtime_fixed_grid}
\end{minipage}
\hfill
\begin{minipage}{0.45\textwidth}
\centering
\begin{tikzpicture}
\begin{axis}[grid=major,width=\linewidth,
xlabel={Grid size \(h_\text{M}\)}, ylabel={Speedup},xtick={3,6,9,12},xticklabels={\(\frac{1}{3}\),\(\frac{1}{6}\),\(\frac{1}{9}\),\(\frac{1}{12}\)}]
\addplot table [x=h_inv,y=speedup] {img/data_experiment_4_estimated.dat};
\end{axis}
\end{tikzpicture}
\caption{Estimated speedup of the tensorized algorithm compared to the standard algorithm for varying grid size \(h_\text{M}\) and \num{100000} samples.}\label{fig:runtime_fixed_samples}
\end{minipage}
\end{figure}
As expected, we observe that the speedup in Figure~\ref{fig:runtime_fixed_samples} grows steadily and is unbounded in contrast to the speedup for fixed grid size and increasing number of samples.
For \( h_\text{M} = \frac{1}{12} \) we observe a speedup of \num[round-mode=places,round-precision=2]{650.8135031} which corresponds to a runtime of \SI[round-mode=places,round-precision=2]{2.865513472222}{\hour} for the TA compared to a runtime of \(\SI[round-mode=places,round-precision=2]{1864.91486111}{\hour} (\approx \SI[round-mode=places,round-precision=2]{77.70478009}{\day}) \) for the SA\@.

We expect that the TA outperforms the SA for realistic muscle geometries or fine grid sizes.
Furthermore, we conclude that using the TA enables us to solve problems that are infeasible to solve using the SA, in reasonable time.

\section{Related work}\label{sec:relatedwork}
Surface EMG signals have been used to localize the innervation zones of skeletal muscle, see, e.g.,~\cite{Doel2011} and the references therein.
Furthermore, researchers are interested in denoising surface EMG signals, i.e., in reducing crosstalk of neighboring muscles or neighboring muscle regions, see~\cite{Mesin2020a}.
In~\cite{Mesin2020}, regularization methods for inverse problems are used to reduce crosstalk in surface EMG signals.

To overcome the ill\hyp{}posedness of inverse problems, regularization methods like the Tikhonov regularization are a widely used ansatz, see, e.g.,~\cite{R13} and the references therein.
The Tikhonov regularization was used in~\cite{R18,R17} to reconstruct the electrical conductivity of biological tissue from EMG measurements.
Moreover, in~\cite{R17} model order reduction was used to accelerate the computations.
For other Bayesian inverse problems different approaches to speedup the sampling process have been examined, e.g., in~\cite{R3} the authors used a method based on polynomial chaos expansions to construct a surrogate of the forward problem.
In~\cite{R5} quasi Monte Carlo methods and multilevel Monte Carlo methods were used to accelerate the convergence of the sampling algorithm.

Furthermore, low-rank tensor methods were examined in the context of Bayesian inverse problems.
In~\cite{Dolgov2020} low-rank tensor formats were used to compute a surrogate of the target distribution.
There the authors directly approximated the target distribution using a generalization of the cross approximation.
In~\cite{Eigel2018} the authors used low-rank tensor formats to approximate the stochastic Galerkin solution of the parameter-dependent forward problem to achieve a discrete representation of the posterior distribution.

\section{Conclusion}\label{sec:conclusion}%
Applying mathematical theory results in an efficient algorithm to solve the Bayesian inverse EMG problem.
Proving the well-posedness of the Bayesian inverse EMG problem guarantees the convergence of this algorithm.
Further, proving a data-sparse representation of the forward EMG problem allows for the efficient precomputation of the parameter-dependent forward solution.
The presented numerical experiments support this mathematical theory but also indicate that a high number of samples is required to obtain accurate results.
The sampling algorithm which uses the data-sparse representation of the forward EMG problem computes this high number of samples in a reasonable time.

The mathematical theory of the Bayesian inverse problem holds for general symmetric positive definite conductivities and thus for arbitrary muscle fiber directions.
The low-rank representation of the forward EMG problem, however, holds for fixed muscle fiber directions only.
In our numerical experiments the speedup using tensor methods enables solving problems with grid sizes that are infeasible using classical methods.
Therefore, future work is the generalization of the low-rank representation of the right-hand side of the forward EMG problem to arbitrary muscle fiber directions.
This generalization could enable the computation of realistic problems in medical applications and lead to a non-invasive and radiation-free imaging method.

\section*{Acknowledgments}
We thank Maren Klever for her critical reading of and suggestions for this article.

We thank the anonymous referees for helping to improve the article by their suggestions.

This research was partially funded by the Deutsche Forschungsgemeinschaft (DFG, German Research Foundation) under Germany's Excellence Strategy -- EXC-2075 -- 390740016.

L.\@ Grasedyck and T. \@A.\@ Werthmann have been supported by the DFG within the DFG priority program 1886 (SPPPoly) under Grant No.\@ GR-3179/5-1.
\printbibliography{}
\end{document}